\newif\iffurther
\newtheorem{thm}{Theorem}[section] 
\newtheorem{cor}[thm]{Corollary}
\newtheorem{lem}[thm]{Lemma}
\newtheorem{prop}[thm]{Proposition}
\newtheorem{rem}[thm]{Remark}
\newtheorem*{thma1}{\textbf{Theorem A1}}
\newtheorem*{thma2}{\textbf{Theorem A2}}
\newtheorem*{thmb1}{\textbf{Theorem B1}}
\newtheorem*{thmb2}{\textbf{Theorem B2}}
\newtheorem*{thmc}{\textbf{Theorem C}}
\newtheorem*{thmd}{\textbf{Theorem D}}
\def\[{\left[}
\def\]{\right]}
\def\S{\mathcal{S}}
\def\GK{{\operatorname{GKdim}}}
\def\Lin{{\operatorname{Lin}}}
\def\Span{{\operatorname{Span}}}
\def\charac{{\operatorname{char}}}
\def\coeff{{\operatorname{coeff}}}
\def\GKdim{{GK-dimension}}
\long\def\forget#1\forgotten{{}}
\begin{document}

\title[Algebras of intermediate growth]{Nil algebras, Lie algebras and wreath products with intermediate and oscillating growth}

\author{Be'eri Greenfeld}
\address{Department of Mathematics, University of California, San Diego, La Jolla, CA, 92093, USA}
\email{bgreenfeld@ucsd.edu}

\author{Efim Zelmanov}
\address{Department of Mathematics, University of California, San Diego, La Jolla, CA, 92093, USA}
\email{efim.zelmanov@gmail.com}
\keywords{Nil algebras, nil Lie algebras, growth of algebras, matrix wreath products, Gel'fand-Kirillov dimension, tensor products}

\subjclass[2020]{16P90, 16N40, 16S30}

\begin{abstract}
We construct finitely generated nil algebras with prescribed growth rate. In particular, any increasing submultiplicative function is realized as the growth function of a nil algebra up to a polynomial error term and an arbitrarily slow distortion.

We then move on to examples of nil algebras and domains with strongly oscillating growth functions and construct primitive algebras for which the Gel'fand-Kirillov dimension is strictly sub-additive with respect to tensor products, thus answering a question from \cite{KrauseLenagan,KrempaOkninski}.




\end{abstract}

\maketitle

\section{Introduction}




Let $F$ be a field and let $A$ be a finitely generated, infinite-dimensional $F$-algebra.
Fixing a finite-dimensional generating subspace $A=F\left<V\right>$, the growth of $A$ with respect to $V$ is defined to be the function:
$$g_{A,V}(n)=\dim_F \left(F+V+V^2+\cdots+V^n\right)$$
If $1\in V$ then equivalently $g_{A,V}(n)=\dim_F V^n$. This function obviously depends on the choice of $V$, but only up to the following equivalence relation.
We say that $f\preceq g$ if $f(n)\leq Cg(Dn)$ for some $C,D>0$ and for all $n\in \mathbb{N}$, and $f\sim g$ (asymptotically equivalent) if $f\preceq g$ and $g\preceq f$. Therefore when talking about `the growth of an algebra' one refers to $g_A(n)$ as the equivalence class of the functions $g_{A,V}(n)$ (for an arbitrary $V$) under the equivalence relation $\sim$.
A general reference for growth of algebras is \cite{KrauseLenagan}.

One of the most fundamental problems in combinatorial algebra is to characterize the possible growth rates of groups, algebras and Lie algebras.
The first example of a group of intermediate growth (that is, super-polynomial but subexponential) was given by Grigorchuk \cite{GrigorchukIntermediate2}.

How do growth functions of algebras look like? 
Let $g_A(n)$ be the growth function of an infinite-dimensional, finitely generated algebra $A$ with respect to a fixed generating subspace. Then $g_A(n)$ is increasing ($g_A(n)<g_A(n+1)$) and submultiplicative ($g_A(n+m)\leq g_A(n)g_A(m)$). Bell and Zelmanov \cite{BellZelmanov} found a concrete characterization of growth functions of algebras, from which it follows that any increasing and submultiplicative function is equivalent to the growth of some finitely generated algebra up to a linear error term, which is the best possible approximation.

Of particular interest is the class of associative nil algebras. Recall that an associative algebra is called nil if all of its elements are nilpotent. For many years it has been an open problem if an infinite-dimensional nil algebra can have a polynomially bounded growth. In 2007 Lenagan and Smoktunowicz \cite{LenaganSmoktunowicz} (see also \cite{LenaganSmoktunowiczYoung}) constructed infinite-dimensional nil algebras of polynomially bounded growth over countable fields. Bell and Young \cite{BellYoung} constructed infinite-dimensional nil algebras over arbitrary fields whose growth is bounded above by an arbitrarily slow super-polynomial function. In \cite{AAJZ_ERA} it is conjectured that any (non-linear) function which occurs as the growth rate of an algebra is realizable as the growth of a nil algebra; this can be thought of as a strong quantitative version of the Kurosh Problem.

We prove the following approximation of the aforementioned conjecture, establishing its validity up to a polynomial error term and an arbitrarily small distortion.

\begin{thma1}[{Arbitrary growth: Countable fields}]
Let $f\colon \mathbb{N}\rightarrow \mathbb{N}$ be an increasing, submultiplicative function and let $\delta\colon \mathbb{N}\rightarrow \mathbb{N}$ be an arbitrarily slow, non-decreasing function tending to infinity. 

Let $F$ be a countable field. Then there exists a finitely generated nil algebra $R$ over $F$ whose growth function $g_R(n)$ satisfies:
$$ f\left(\frac{n}{\delta(n)}\right) \preceq g_R(n) \preceq p(n)\cdot f(n) $$
where $p(n)$ is a polynomial which is independent of $f$.
\end{thma1}

The situation over uncountable fields is different and requires a mild relaxation of the formulation:

\begin{thma2}[{Arbitrary growth: Arbitrary fields}]
Let $f\colon \mathbb{N}\rightarrow \mathbb{N}$ be an increasing, submultiplicative function and let $\omega\colon \mathbb{N}\rightarrow \mathbb{N}$ be an arbitrarily slow, non-decreasing super-polynomial function.
Let $\delta\colon \mathbb{N}\rightarrow~\mathbb{N}$ be an arbitrarily slow, non-decreasing function tending to infinity.

Let $F$ be an arbitrary field. Then there exists a finitely generated nil algebra $R$ over $F$ whose growth function $g_R(n)$ satisfies:
$$ f\left(\frac{n}{\delta(n)}\right) \preceq g_R(n) \preceq \omega(n)\cdot f(n). $$
\end{thma2}

Kassabov and Pak \cite{KassabovPak} constructed groups whose growth functions oscillate between given intermediate functions (e.g.~$\exp(n^{4/5})$) and an arbitrarily rapid subexponential function; see also \cite{BartholdiErschler}. Bartholdi and Erschler \cite{Permutational} constructed periodic groups with explicitly given growth functions, ranging over a wide variety of intermediate growth functions.

\begin{thmb1}[{Oscillating growth: Countable fields}]
Let $f\colon\mathbb{N}\rightarrow\mathbb{N}$ be an arbitrarily rapid subexponential function.
Let $F$ be a countable field. Then there exists a finitely generated nil algebra $R$ over $F$ such that:
\begin{itemize}
\item $g_R(n)\leq n^{6+\varepsilon}$ infinitely often for every $\varepsilon>0$; and
\item $g_R(n)\geq f(n) $ infinitely often.
\end{itemize}
\end{thmb1}

\begin{thmb2}[{Oscillating growth: Arbitrary fields}]
Let $f\colon\mathbb{N}\rightarrow\mathbb{N}$ be an arbitrarily rapid subexponential function and let $\omega\colon \mathbb{N}\rightarrow \mathbb{N}$ be an arbitrarily slow, non-decreasing, super-polynomial function.

Let $F$ be an arbitrary field. Then there exists a finitely generated nil algebra $R$ over $F$ such that:
\begin{itemize}
\item $g_R(n)\leq \omega(n) $ infinitely often; and
\item $g_R(n)\geq f(n) $ infinitely often.
\end{itemize}
\end{thmb2}

These are the first examples of nil algebras whose growth oscillates between a polynomial and an arbitrarily rapid subexponential function. Monomial algebras with oscillating growth were constructed in \cite{BBL}.

Petrogradsky \cite{PetrogradskyLie} constructed a far-reaching Lie-theoretic analogy of these phenomena. Namely, he constructed nil (restricted) Lie algebras over fields of positive characteristic, whose growth functions oscillate between a function very close to linear and a function very close (though not arbitrarily close) to exponential. The growth of a finitely generated Lie algebra $L$ with respect to a generating subspace $V$ is the dimension of the space spanned by Lie monomials of length at most $n$ in the elements of $V$, that is, $g_L(n)=\dim_F(V+V^{[2]}+\cdots+V^{[n]})$.

Using Theorems B1, B2 we can construct nil Lie algebras with oscillating growth functions over fields of arbitrary characteristic. A Lie algebra $L$ is nil if $ad_x$ is nilpotent for every $x\in L$.

\begin{cor} \label{oscillatingLie}
Let $f\colon\mathbb{N}\rightarrow\mathbb{N}$ be an arbitrarily rapid subexponential function.
Let $F$ be a countable field of $\charac(F)\neq 2$. Then there exists a finitely generated nil Lie algebra $L$ over $F$ such that:
\begin{itemize}
\item $g_L(n)\leq n^{6+\varepsilon}$ infinitely often for every $\varepsilon>0$; and
\item $g_L(n)\geq f(n) $ infinitely often.
\end{itemize}
\end{cor}

\begin{cor} \label{oscillatingLie2}
Let $f\colon\mathbb{N}\rightarrow\mathbb{N}$ be an arbitrarily rapid subexponential function and let $\omega\colon \mathbb{N}\rightarrow \mathbb{N}$ be an arbitrarily slow, non-decreasing, super-polynomial function.

Let $F$ be an uncountable field of $\charac(F)\neq 2$. Then there exists a finitely generated nil Lie algebra $L$ over $F$ such that:
\begin{itemize}
\item $g_L(n)\leq \omega(n) $ infinitely often; and
\item $g_L(n)\geq f(n) $ infinitely often.
\end{itemize}
\end{cor}

Our next goal is to construct domains with oscillating intermediate growth.
There are some similarities between growth rates of domains and growth rates of groups. For instance, there are no known domains of super-polynomial growth slower than $\exp(\sqrt{n})$; any domain of subexponential growth admits a division ring of fractions, and it is a widely open problem whether the group algebra of a torsion-free group is always a domain.
As mentioned before, Kassabov and Pak \cite{KassabovPak} constructed finitely generated groups whose growth oscillates between $\exp(n^{4/5})$ and an arbitrarily rapid (subexponential) function. We prove:

\begin{thmc}
Let $f\colon \mathbb{N}\rightarrow \mathbb{N}$ be a subexponential function. Then there exists a finitely generated domain $A$ such that $g_A(n)\geq f(n)$ infinitely often, and $g_A(n)\leq~\exp\left(n^{\frac{3}{4}+\varepsilon}\right)$ infinitely often (for arbitrary $\varepsilon>0$).
\end{thmc}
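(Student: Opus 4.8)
My plan is to decouple two tasks: producing the prescribed \emph{oscillation} of the growth function, and guaranteeing that $A$ is a \emph{domain}. The second task rules out the standard device of imposing monomial relations, which manufactures zero divisors, so I would instead build $A$ so that the domain property is automatic --- either by realizing $A$ inside an ambient ring that is a domain for structural reasons (a division ring of fractions, or a ring of differential operators; note that one cannot use a free algebra or an Ore extension, since the former forces $A$ to be free and the latter forces polynomial growth), or by a generic-relations / Golod--Shafarevich style argument with \emph{homogeneous} relations, in which a Bergman Diamond Lemma computation controls the Hilbert function while a separate combinatorial argument rules out zero divisors.

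First I would fix $\varepsilon>0$ and, given the arbitrarily rapid subexponential $f$, choose a lacunary sequence of scales $1\ll N_1\ll M_1\ll N_2\ll M_2\ll\cdots$ and design a finitely generated graded ``skeleton'' whose Hilbert function oscillates as required --- in the spirit of the oscillating monomial algebras of \cite{BBL}. On each ``thick'' window $[N_k,M_k]$ the skeleton should behave almost freely on a slowly growing number $s_k$ of generators of slowly growing degree $d_k$, tuned so that $\dim A_{\le M_k}\ge f(M_k)$; this is possible precisely because $f$ is subexponential, so $\log f(M_k)=o(M_k)$, and $s_k,d_k$ may be allowed to tend to infinity. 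On each ``thin'' window $[M_{k-1},N_k]$ only polynomially many new basis vectors appear per degree, so that $\dim A_{\le N_k}\le\exp(N_k^{3/4+\varepsilon})$; since $N_k$ may be chosen enormously larger than $M_{k-1}$, the ``debt'' $\log\dim A_{\le M_{k-1}}$ accumulated in the previous thick window is absorbed. Optimizing the lengths of the thin windows against the growth rate achievable inside the thick windows is what forces the exponent $3/4$ rather than a hypothetical $\tfrac12$, paralleling the $\exp(n^{4/5})$ threshold in the group constructions of \cite{KassabovPak}.

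Next I would promote the skeleton to an actual domain $A$ with the same Hilbert function. In the ``ambient domain'' approach I would order the generators, regard the forbidden transitions of the skeleton as leading terms of a noncommutative rewriting system, and replace each monomial relation ``$w=0$'' by ``$w=(\text{strictly smaller correction})$'', where the corrections are chosen so that (a) all overlap ambiguities resolve, hence by the Diamond Lemma $\dim_F A_{\le n}$ equals the skeleton's Hilbert function for every $n$, and (b) the assignment extends to an injective homomorphism of $A$ into the chosen ambient domain. Then $A$ is a finitely generated subalgebra of a domain, hence a domain, and $g_A$ oscillates as required.

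The hard part will be reconciling (a) and (b): the corrections must simultaneously close up the Diamond Lemma confirmations --- preserving the delicate thin/thick structure --- and be consistent with an honest embedding into a domain (or, in the homogeneous-relations variant, be ``generic enough'' to avoid zero divisors yet ``structured enough'' to keep the Hilbert function small on the thin windows). This means the skeleton itself must be chosen from the start inside a deformable class, which limits how close to genuinely free the thick windows may be; that limitation is precisely what degrades a hypothetical $\exp(\sqrt n)$ bound to $\exp(n^{3/4+\varepsilon})$. Carrying out the lacunary construction inside this class, and checking that the lower-order corrections do not perturb the growth estimates, is the technical heart of the argument.
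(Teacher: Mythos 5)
Your plan correctly identifies the two tasks to decouple (oscillating Hilbert function, domain property) and correctly rules out monomial relations, but the device you propose for the second task is not one you can close, and it is not the one the paper uses. The paper's key idea is that the \emph{universal enveloping algebra of a Lie algebra is automatically a domain} by Poincar\'e--Birkhoff--Witt. Concretely: one first builds (via the commutator Lie algebra $L=[A,A]$ of an oscillating monomial algebra $A$, using \cite{AlahmadiAlharthi} and a variant of the Smoktunowicz--Bartholdi construction) a finitely generated Lie algebra $L$ whose growth oscillates between $\geq f$ on ``thick'' windows and $\leq k^2\log_2 k$ on ``thin'' windows. Then $U(L)$ is the desired domain, and its growth is read off from $g_L$ via the PBW generating function identity $\prod_n (1-t^n)^{-g_L'(n)}=\sum_n g_U'(n)t^n$ and Petrogradsky's asymptotics. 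In particular the exponent $3/4$ is exactly Petrogradsky's transformation $\alpha\mapsto\alpha/(\alpha+1)$ applied to $\alpha=3$, the degree coming from the $k^2\log k$ bound on the thin windows (which in turn reflects the $n^2$ loss in the combinatorial Lemma~\ref{SBmon}). It is not a ``thick vs.\ thin window balancing'' phenomenon analogous to the $4/5$ of Kassabov--Pak, as you speculate.

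Your alternative --- replace each monomial relation $w=0$ by $w=(\text{lower terms})$ and use the Diamond Lemma to preserve the Hilbert function while ruling out zero divisors --- has a genuine gap you yourself flag: the Diamond Lemma only controls $\dim_F A_{\leq n}$, it says nothing about $A$ being a domain, and finding corrections that simultaneously resolve all overlaps, embed into an ambient domain, and keep the thin-window counts polynomial is exactly the hard problem, with no candidate ambient domain in sight once free algebras and Ore extensions are (correctly) excluded. Universal enveloping algebras are precisely the structural escape hatch that makes the domain property free; once you use them, the whole Diamond-Lemma machinery becomes unnecessary and the growth computation is handled by the finitary version of the PBW/Petrogradsky identity (Lemma~\ref{PowerSeries} in the paper).
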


Finally, we turn to a somewhat different construction of algebras whose growth oscillates within a prescribed \textit{polynomial} interval. Recall that the Gel'fand-Kirillov (GK) dimension of a finitely generated algebra $A=F\left<V\right>$ is:
$$ \GK(A) = \limsup_{n\rightarrow\infty} \frac{\log \dim_F V^n}{\log n} $$
namely the (optimal) degree of polynomial growth of $A$.
For algebras with `sufficiently regular' growth functions, the GK-dimension is additive with respect to tensor products. 
Warfield \cite{Warfield} proved that $\GK(A\otimes_F B)$ might be smaller than $\GK(A)+\GK(B)$, and gave an example where: $$ \GK(A\otimes_F B)=\max\{\GK(A),\GK(B)\}+2 $$ which is the minimum possible value for $\GK(A\otimes_F B)$. Let $\alpha=\GK(A),\beta=\GK(B)$ and suppose that $\alpha\leq \beta$. Warfield asked whether any value in the interval $[2+\beta,\alpha+\beta]$ is attainable for $\GK(A\otimes_F B)$. This was settled in the affirmative by Krempa and Okni\'nski \cite{KrempaOkninski}, even in a slightly wider generality. Both Warfield's and Krempa-Okni\'nski's constructions have large prime radicals, which is a common phenomenon among generic constructions of monomial algebras. 
This led Krempa and Okni\'nski to ask whether semiprime examples of this flavor can be found. The problem of finding Warfield-type examples among semiprime rings is mentioned also by Krause and Lenagan \cite[Page~167]{KrauseLenagan}.

We settle this problem in the affirmative. Our constructions have a symbolic dynamical origin, and arise as monomial algebras associated with Toeplitz subshifts with oscillating complexity growth.

\begin{thmd} \label{ThmD}
For arbitrary $2\leq \gamma \leq \alpha\leq \beta < \infty$ there exist finitely generated primitive monomial algebras $A,B$ such that: $$ \GK(A)=\alpha,\ \ \GK(B)=\beta,\ \ \GK(A\otimes_F B)=\beta+\gamma. $$
\end{thmd}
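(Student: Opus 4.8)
The plan is to build $A$ and $B$ as monomial algebras $F\langle X\rangle/I(\eta)$ associated to one-sided infinite words $\eta$ coming from Toeplitz subshifts, exploiting the fact that for such algebras the growth function is governed by the subword complexity $p_\eta(n)$ of the defining word, so that $\GK = \limsup_{n\to\infty}\frac{\log p_\eta(n)}{\log n}$ once we arrange $p_\eta(n)$ to grow polynomially. The key design principle is that Toeplitz words can be constructed with \emph{oscillating} complexity: along one sequence of scales the complexity is forced up (by inserting fresh, high-complexity blocks), while along another, sparse sequence of scales the word looks almost periodic, so the complexity drops back to nearly linear. Concretely I would, for each target exponent $\alpha$, construct a word $\eta_\alpha$ with $p_{\eta_\alpha}(n)\sim n^\alpha$ along a fast subsequence but $p_{\eta_\alpha}(n)$ nearly linear along a lacunary subsequence of scales $n_1\ll n_2\ll\cdots$; choosing these ``low-complexity'' scales \emph{in common} for $\eta_\alpha$ and $\eta_\beta$ is what will let the tensor product's GK-dimension undershoot $\alpha+\beta$.

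The second ingredient is the tensor-product estimate. For monomial algebras $A,B$ on disjoint alphabets, $\dim_F(V\oplus W)^n = \sum_{i+j\le n}\dim_F V^i\cdot\dim_F W^j$, and with the Toeplitz-based complexities this sum is dominated, up to polynomial factors, by $\max_{i+j\le n}\, i^{\alpha}\, j^{\beta}$ \emph{except} that at the common low-complexity scales the relevant product collapses. The upper bound $\GK(A\otimes_F B)\le\beta+\gamma$ then comes from: (i) the generic convexity bound $\GK(A\otimes B)\le\GK(A)+\GK(B)=\alpha+\beta$, combined with (ii) a refined estimate that, because there is a lacunary sequence of scales where \emph{both} $p_{\eta_\alpha}$ and $p_{\eta_\beta}$ are nearly linear, the $\limsup$ defining $\GK(A\otimes_F B)$ is actually achieved at scales where one factor contributes only its ``$+2$''-type overhead (the universal $2$ coming from the fact that a monomial algebra of a non-eventually-periodic word has $\GK\ge 2$, Warfield's phenomenon), so the exponent is pushed down from $\alpha+\beta$ to exactly $\beta+\gamma$. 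The parameter $\gamma$ is tuned by how fast we allow the ``suppressed'' factor's complexity to recover between the common low scales: taking $\gamma=2$ recovers the Warfield minimum and $\gamma=\alpha$ recovers full additivity, with all intermediate values obtained by interpolation. The matching lower bound $\GK(A\otimes_F B)\ge\beta+\gamma$ is the easier direction: one exhibits an explicit subsequence of scales, away from the common low-complexity scales, on which $\dim_F V^i\cdot\dim_F W^j$ with $i\asymp n^{1-\theta}$, $j\asymp n$ realizes $n^{\beta+\gamma}$.

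For \emph{primitivity}, I would invoke the known criterion (as in the Bell--Zelmanov / Okni\'nski circle of ideas) that a monomial algebra $F\langle X\rangle/I(\eta)$ is primitive when $\eta$ is a suitably ``non-periodic and prolongable'' infinite word — e.g.\ a right-infinite word that is not eventually periodic and whose set of left extensions is rich enough; Toeplitz words, being minimal and aperiodic, satisfy such a criterion after a mild modification (passing to a word whose subshift is minimal and for which the associated algebra has a faithful irreducible module built from the orbit closure). This replaces the large prime radical that plagued Warfield's and Krempa--Okni\'nski's monomial constructions. The main obstacle I anticipate is the \emph{simultaneous} control of three growth functions — $p_{\eta_\alpha}$, $p_{\eta_\beta}$, and the complexity of the product word governing $A\otimes_F B$ — at the common lacunary scales: one must choose the Toeplitz period structure of $\eta_\alpha$ and $\eta_\beta$ coherently (a common tower of periods $q_1\mid q_2\mid\cdots$) so that the low-complexity windows line up exactly, while still keeping the fast-scale complexities independently equal to $n^\alpha$ and $n^\beta$ and keeping both words aperiodic; balancing these constraints, and verifying that the interpolation in $\gamma$ is genuinely continuous over $[2,\alpha]$, is where the real combinatorial work lies.
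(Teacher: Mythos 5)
Your sketch lands on the same strategy as the paper: realize $A$ and $B$ as monomial algebras of Toeplitz words (the paper's Lemma~\ref{words} shows these are automatically just-infinite primitive once the growth is super-linear, via uniform recurrence $\Rightarrow$ just-infinite, then Okni\'nski's trichotomy plus a Jacobson-radical argument), and synchronize the complexity oscillations of the two words so that $\GK(A\otimes_F B)$ undershoots $\alpha+\beta$. Two points, though, where the sketch is off in a way that matters. First, the tensor-product dimension: the paper takes $V_X\otimes V_Y$ as generating subspace, notes $(V_X\otimes V_Y)^n\subseteq V_X^n\otimes V_Y^n\subseteq(V_X\otimes V_Y)^{2n}$, and works with the \emph{pointwise} product $g_{A_X}(n)\cdot g_{A_Y}(n)$; your convolution-type formula is a free-product formula (or, read with homogeneous pieces, is asymptotically equivalent), but the pointwise form is what makes the synchronization argument go through cleanly, since it forces both factors to be evaluated at the \emph{same} scale.

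Second, and this is the real gap: as described, you would have $\eta_\alpha$ recover from the common low scales at a single rate, tuned so that it sits at exponent $\gamma$ when $\eta_\beta$ peaks at $\beta$ — but then $\eta_\alpha$ never reaches exponent $\alpha$, contradicting $\GK(A)=\alpha$; and if it does keep rising to $\alpha$, you have not controlled what $\eta_\beta$ contributes at that moment. The paper resolves this with an asymmetric two-phase scheme per cycle: between consecutive reset scales, $X$ has a slow phase (per-step factor $5^{\gamma-2}$) while $Y$ grows at factor $5^{\beta-2}$, and then a fast phase (factor $5^{\alpha-2}$) during which $Y$'s complexity-per-length is held \emph{flat} so that $g_{A_Y}(r)=O(r^2)$. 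This makes the pointwise product bounded by $\max(r^{\beta+\gamma}, r^{\alpha+2})=r^{\beta+\gamma}$ on every scale, and equal to it infinitely often. Relatedly, your lower-bound plan evaluates the two factors at \emph{different} scales $i\asymp n^{1-\theta},\ j\asymp n$, each at its own peak; with the doubly-exponential spacing of peaks in a Toeplitz construction you cannot arrange such a fixed-ratio coincidence, whereas the paper's lower bound finds one common scale $r$ at which $g_{A_X}(r)\gtrsim r^\gamma$ and $g_{A_Y}(r)\gtrsim r^\beta$ simultaneously.
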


\textit{Conventions}. Given a function $f\colon \mathbb{N}\rightarrow \mathbb{N}$, we let $f(x)=f(\lfloor x \rfloor)$ for any real $x\geq 1$. Lie algebras are considered over fields of $\charac\neq 2$.

\section{Matrix wreath products}

We recall the construction of matrix wreath products from \cite{AAJZ_TransAMS}.
Fix an arbitrary base field $F$. Let $B$ be a finitely generated $F$-algebra and $A$ a countably generated $F$-algebra. Fix a finite-dimensional generating subspace for $B$, say, $V$. If $B$ is non-unital, we let $B'$ be its unital hull ($B'=F+B$).  

Let $A\wr B=B+\Lin_F(B',B'\otimes_F A)$, where $\Lin_F(*,*)$ is the space of $F$-linear maps. Then $A\wr B$ is endowed with a multiplicative structure, making it an $F$-algebra. For $f,g\in \Lin_F(B',B'\otimes_F A)$ let: 

$$ fg = (1\otimes \mu)(f\otimes 1)g $$
where $\mu\colon A\otimes_F A\rightarrow A$ is the multiplication map.
Multiplication on $B$ is given by the ring structure of $B$, so it remains to define a $B$-bimodule structure on $\Lin_F(B',B'\otimes_F A)$ which makes $A\wr B$ an associative ring. Given $f\in \Lin_F(B',B'\otimes_F A)$, we let $(fb)(x)=f(bx)$ and $(bf)(x)=(b\otimes 1)f(x)$.

With any linear map $\gamma \colon B'\rightarrow A$, associated is an element $$c_\gamma\in \Lin_F(B',B'\otimes_F A),$$ given by: $$c_\gamma (x)=1\otimes \gamma(x).$$ 
Finally, let $C=F\left<V,c_\gamma\right>\subseteq A\wr B$.

\begin{lem} \label{decomposition}
For any $b_1,\dots,b_s\in B$, we have: $$ c_\gamma b_1 c_\gamma b_2 \cdots c_\gamma b_s(x) = 1\otimes \gamma(b_1)\cdots \gamma(b_{s-1})\gamma(b_sx) $$
\end{lem}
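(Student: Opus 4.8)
The plan is to prove the identity by induction on $s$, simply unwinding the definition of the product on $A\wr B$ and of the bimodule action of $B$ on $\Lin_F(B',B'\otimes_F A)$.

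First I would dispatch the base case $s=1$: by the definition of the right action one has $(c_\gamma b_1)(x)=c_\gamma(b_1x)=1\otimes\gamma(b_1x)$, which is the asserted formula with the empty product $\gamma(b_1)\cdots\gamma(b_{s-1})$ read as $1\in A$. (Here one uses that $B'B\subseteq B\subseteq B'$, so $b_1x\in B'$ and $\gamma(b_1x)$ is defined; likewise $\gamma(b_i)$ means $\gamma(b_i\cdot 1)$.) Next I would record, once and for all, the effect of a product $fg=(1\otimes\mu)(f\otimes 1)g$ on a ``simple'' argument. Since $c_\gamma$ and every product of elements $c_\gamma b_i$ takes values in $1\otimes A\subseteq B'\otimes_F A$, it is enough to compute $fg$ when $g(x)=1\otimes a$ and $f(1)=1\otimes a'$: then $(fg)(x)=(1\otimes\mu)(f(1)\otimes a)=(1\otimes\mu)(1\otimes a'\otimes a)=1\otimes a'a$.

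The inductive step then writes $c_\gamma b_1\cdots c_\gamma b_s=(c_\gamma b_1)\cdot F$ with $F=c_\gamma b_2\cdots c_\gamma b_s$, using associativity of $A\wr B$ (so that the intermediate products remain in $\Lin_F(B',B'\otimes_F A)$ and the parenthesization is irrelevant). By the induction hypothesis $F(x)=1\otimes a$ with $a=\gamma(b_2)\cdots\gamma(b_{s-1})\gamma(b_sx)$, and $(c_\gamma b_1)(1)=1\otimes\gamma(b_1)$; feeding these into the displayed formula above yields $\bigl(c_\gamma b_1\cdots c_\gamma b_s\bigr)(x)=1\otimes\gamma(b_1)a=1\otimes\gamma(b_1)\gamma(b_2)\cdots\gamma(b_{s-1})\gamma(b_sx)$, which closes the induction.

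There is no genuine obstacle here: the statement is a direct computation from the definitions imported from \cite{AAJZ_TransAMS}. The only point that requires care — and the one place where a sign-of-error could creep in — is the bookkeeping of the tensor slots in $fg=(1\otimes\mu)(f\otimes 1)g$: the map $\mu$ acts on the \emph{last two} tensor factors, so the scalar produced by the \emph{left} factor $f$ lands on the \emph{left} of the resulting product in $A$. This is exactly what forces the ordering $\gamma(b_1)\gamma(b_2)\cdots$ in the statement rather than its reverse, so I would make sure to state this convention explicitly before running the induction.
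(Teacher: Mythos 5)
Your proof is correct and is essentially the paper's proof: both reduce to the single computation $(fg)(x)=1\otimes a'a$ when $f(1)=1\otimes a'$ and $g(x)=1\otimes a$, and then iterate. The paper organizes the iteration via the intermediate identity $c_{\tau_1}\cdots c_{\tau_s}(x)=1\otimes\tau_1(1)\cdots\tau_{s-1}(1)\tau_s(x)$ for arbitrary $\tau_i\colon B'\to A$, followed by the substitution $c_\gamma b=c_{\gamma'}$ with $\gamma'(x)=\gamma(bx)$, whereas you induct directly on $c_\gamma b_1\cdots c_\gamma b_s$ --- a cosmetic reorganization, not a different route.
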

\begin{proof}
Suppose that $\sigma,\tau\colon B'\rightarrow A$ are maps. Then: $$c_\tau c_\sigma(x)=1\otimes \tau(1)\sigma(x).$$ Indeed,
\begin{eqnarray*}
c_\tau c_\sigma (x) & = & (1\otimes \mu)(c_\tau \otimes 1)c_\sigma(x) \\
& = & (1\otimes \mu)(c_\tau \otimes 1)(1\otimes \sigma(x)) \\
& = & (1\otimes \mu)(c_\tau(1)\otimes \sigma(x)) \\
& = & (1\otimes \mu)(1\otimes \tau(1)\otimes \sigma(x)) \\
& = & 1\otimes \tau(1)\sigma(x).
\end{eqnarray*}
Therefore, given $s$ maps: $$c_{\tau_1}c_{\tau_2}\cdots c_{\tau_s}(x)=1\otimes \tau_1(1)\cdots \tau_{s-1}(1)\tau_s(x)$$
Since $c_\gamma b=c_{\gamma'}$, where $\gamma'(x)=\gamma(bx)$, we get that: 
$$ c_\gamma b_1 c_\gamma b_2 \cdots c_\gamma b_s(x) = 1\otimes \gamma(b_1)\cdots \gamma(b_{s-1})\gamma(b_sx) $$
as claimed.
\end{proof}

Let $g_B,g_C$ be the growth functions of $B,C$ with repsect to $V,V+Fc_\gamma$, respectively.
The growth of $C$ turns out to be closely related to the following measurement of growth of $\gamma$. Let:

$$ W_n = \sum_{i_1+\cdots+i_s\leq n} \gamma\left(V^{i_1}\right)\cdots \gamma\left(V^{i_s}\right) $$
and let $w_\gamma(n)=\dim_F W_n$. Obviously, $w_\gamma(n)$ is monotonely non-decreasing.

In \cite{AAJZ_TransAMS} it was proved that $g_C(n)\preceq g_B(n)^2w_\gamma(n)$, and if $\gamma$ satisfies an additional condition, called \textit{density}, then $g_C(n) \sim g_B(n)^2w_\gamma(n)$.
More concretely, $\gamma$ is dense if for any system of linearly independent elements $b_1,\dots,b_k\in B$ and any $0\neq a\in A$, there exists $b\in B$ such that $\gamma(b_ib)=0$ for all $1\leq i\leq k-1$ but $a\gamma(b_kb)\neq 0$.
Without density, we have a weaker lower bound, which suffices for our purposes.




\begin{lem} \label{growth_wreath}
Let $\gamma\colon B'\rightarrow A$ be a linear map. Then $w_\gamma(n)\leq g_C(2n)$ and $g_C(n)\leq g_B(n)^2w_\gamma(n)+g_B(n)$, and thus $w_\gamma(n) \preceq g_C(n) \preceq g_B(n)^2 w_\gamma(n)$.
\end{lem}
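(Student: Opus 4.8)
The plan is to prove the two displayed inequalities $w_\gamma(n)\le g_C(2n)$ and $g_C(n)\le g_B(n)^2w_\gamma(n)+g_B(n)$ directly; the asymptotic statement then follows formally, since $w_\gamma(n)\le g_C(2n)$ gives $w_\gamma\preceq g_C$, and $g_C(n)\le g_B(n)^2w_\gamma(n)+g_B(n)\le 2g_B(n)^2w_\gamma(n)$ (using $g_B,w_\gamma\ge 1$) gives $g_C\preceq g_B^2w_\gamma$. The only computational inputs are Lemma \ref{decomposition} and the bimodule rules $(bf)(x)=(b\otimes 1)f(x)$ and $(fb)(x)=f(bx)$.

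\emph{Upper bound.} Write $g_C(n)=\dim_F(F+U+\cdots+U^n)$ with $U=V+Fc_\gamma$; this space is spanned by the monomials of length $\le n$ in $V\cup\{c_\gamma\}$. Those not involving $c_\gamma$ lie in $F+V+\cdots+V^n$ and contribute at most $g_B(n)$. A monomial involving $c_\gamma$ can be written $b_0c_\gamma b_1c_\gamma\cdots c_\gamma b_s$ with $s\ge 1$ and each $b_i$ a (possibly empty) product of elements of $V$, subject to $\deg b_0+s+\sum_{i=1}^s\deg b_i\le n$. Pulling $b_0$ out with $(bf)(x)=(b\otimes 1)f(x)$ and applying Lemma \ref{decomposition} to the tail, this monomial equals the linear map $x\mapsto b_0\otimes\gamma(b_1)\cdots\gamma(b_{s-1})\gamma(b_sx)$. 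Here $b_0$ ranges in $F+V+\cdots+V^n$ (dimension $g_B(n)$); the element $\gamma(b_1)\cdots\gamma(b_{s-1})\in A$ lies in $W_n$ (dimension $w_\gamma(n)$), since $\sum_{i=1}^{s-1}\deg b_i\le n$; and the linear map $\psi_{b_s}\colon x\mapsto\gamma(b_sx)$ ranges, as $b_s$ varies over $F+V+\cdots+V^n$, over a subspace of $\Lin_F(B',A)$ of dimension at most $g_B(n)$, because $b_s\mapsto\psi_{b_s}$ is linear. Since $(b_0,c,\psi)\mapsto\bigl(x\mapsto b_0\otimes c\,\psi(x)\bigr)$ is trilinear, the span of all these monomials has dimension at most $g_B(n)\cdot w_\gamma(n)\cdot g_B(n)$, and adding the $c_\gamma$-free part yields $g_C(n)\le g_B(n)^2w_\gamma(n)+g_B(n)$.

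\emph{Lower bound.} Fix $\lambda\in\Lin_F(B',F)$ with $\lambda(1)=1$ and define a linear map $\Phi\colon A\wr B\to A$ by $\Phi|_B=0$ and $\Phi(f)=(\lambda\otimes\id_A)(f(1))$ for $f\in\Lin_F(B',B'\otimes_FA)$. Given monomials $u_1,\dots,u_s$ in the elements of $V$ with $s\ge 1$ and $\sum_j\deg u_j\le n$, the element $c_\gamma u_1c_\gamma u_2\cdots c_\gamma u_s$ is a product of $s+\sum_j\deg u_j\le 2n$ generators, hence lies in $F+U+\cdots+U^{2n}$; by Lemma \ref{decomposition} its value at $1$ is $1\otimes\gamma(u_1)\cdots\gamma(u_{s-1})\gamma(u_s)$, so $\Phi$ maps it to $\gamma(u_1)\cdots\gamma(u_s)$. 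Such products span $W_n$, so $\Phi\bigl(F+U+\cdots+U^{2n}\bigr)\supseteq W_n$, whence $g_C(2n)=\dim_F(F+U+\cdots+U^{2n})\ge\dim_F W_n=w_\gamma(n)$.

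\emph{Main point.} There is no deep obstacle here; the argument is bookkeeping on monomials, and the one place that needs care is the treatment of empty factors $b_i$ (equivalently, of consecutive occurrences of $c_\gamma$) and of $\gamma(1)$. This is why one must work throughout with $B'=F+B$ and with the subspaces $F+V+\cdots+V^n$ (of dimension exactly $g_B(n)$) rather than $V+\cdots+V^n$, and it is these harmless $+1$'s and unit-insertions that are absorbed into the $g_B(n)$ and $w_\gamma(n)$ factors. Organizing the upper bound through the trilinear map above is what keeps the estimate clean and avoids an explicit enumeration of monomials.
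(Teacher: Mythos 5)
Your lower bound is, in substance, the paper's own argument: the paper introduces the substitution map $T\colon\psi\mapsto\psi(1)$ on $\Lin_F(B',B'\otimes_F A)$ and shows $T$ carries $\sum_{i_1+\cdots+i_s\le n} c_\gamma V^{i_1}\cdots c_\gamma V^{i_s}$ onto $1\otimes W_n$. Your $\Phi=(\lambda\otimes\id_A)\circ T$ (extended by $0$ on $B$) is a cosmetically different packaging of the same idea — the extra $\lambda\otimes\id_A$ just replaces the target $B'\otimes_F A$ by $A$, and Lemma~\ref{decomposition} at $x=1$ does all the work in both versions.

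Your upper bound, however, is a genuine departure: the paper does not prove $g_C(n)\le g_B(n)^2 w_\gamma(n)+g_B(n)$ but cites \cite[Lemma~3.5, Corollary~3.6]{AAJZ_TransAMS} for it, whereas you give a self-contained argument by classifying monomials $b_0 c_\gamma b_1\cdots c_\gamma b_s$ and running the length bookkeeping through the trilinear map $(b_0,c,\psi)\mapsto\bigl(x\mapsto b_0\otimes c\,\psi(x)\bigr)$. This is a nice simplification and makes the lemma independent of the reference. One place to tighten: when $s=1$, the monomial $b_0c_\gamma b_1$ acts as $x\mapsto b_0\otimes\gamma(b_1x)$, so the ``middle'' element is the empty product, which is not an element of $W_n$ (since $A$ need not be unital); your trilinear bound as stated does not directly cover this case. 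Either bound the $s=1$ contribution separately by $g_B(n)^2$ (which only perturbs the constant and leaves $g_C\preceq g_B^2 w_\gamma$ intact), or fix a convention in which $W_n$ contains the scalars, and say so. You flag the ``empty factors/$\gamma(1)$'' subtlety at the end, but you should actually resolve it — as written, the displayed inequality $g_C(n)\le g_B(n)^2 w_\gamma(n)+g_B(n)$ is claimed with the exact constant, and that is precisely where the $s=1$ term matters.

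Finally, in passing from the inequalities to $g_C\preceq g_B^2 w_\gamma$ you invoke $w_\gamma(n)\ge 1$; this is harmless (if $\gamma=0$ then $C=B$ and everything is trivial, and otherwise $w_\gamma(n)\ge 1$ for $n\gg 1$), but it is worth one sentence.
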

\begin{proof}
The inequality $g_C(n)\preceq g_B(n)^2w_\gamma(n)$ is \cite[Corollary~3.6]{AAJZ_TransAMS} and its quantitative form readily follows from \cite[Lemma~3.5]{AAJZ_TransAMS}.

For the other inequality, recall that by Lemma \ref{decomposition}, given $b_1,\dots,b_s\in~B$, the map $c_\gamma b_1 c_\gamma b_2 \cdots c_\gamma b_s$ takes the form $x\mapsto 1\otimes \gamma(b_1)\cdots \gamma(b_s x)$. In particular, the substitution map:
$$ T\colon \Lin_F(B',B'\otimes_F A) \rightarrow B'\otimes_F A $$
$$ T\colon \psi \mapsto \psi(1) $$
surjectively carries: $$T\left( \sum_{i_1+\cdots+i_s\leq n} c_\gamma V^{i_1} c_\gamma V^{i_2}\cdots c_\gamma V^{i_s}\right) = 1\otimes W_n$$
hence: $$\dim_F \sum_{i_1+\cdots+i_s\leq n} c_\gamma V^{i_1} c_\gamma V^{i_2}\cdots c_\gamma V^{i_s} \geq w_\gamma(n).$$ Since: $$\sum_{i_1+\cdots+i_s\leq n} c_\gamma V^{i_1} c_\gamma V^{i_2}\cdots c_\gamma V^{i_s} \subseteq \left(V+Fc_\gamma\right)^{\leq 2n},$$ it follows that $w_\gamma(n)\leq g_C(2n)$.
\end{proof}

\section{Linear maps with oscillating growth}

A (non-decreasing) function $f\colon \mathbb{N}\rightarrow\mathbb{N}$ is \textit{subexponential}, if $\limsup_{n\rightarrow \infty} \sqrt[n]{f(n)} =~ 1$.

\begin{prop} \label{Oscillating_transformation}
Let $B$ be a finitely generated, infinite-dimensional unital algebra. Let $f_1,f_2\colon\mathbb{N}\rightarrow\mathbb{N}$ be monotone non-decreasing functions such that: 
\begin{itemize}
\item $f_1(n)\xrightarrow{n\rightarrow\infty} \infty$ (perhaps very slowly); and
\item $f_2(n)$ is subexponential.
\end{itemize}
Then there exists a locally nilpotent algebra $A$ with a linear map $\gamma\colon B\rightarrow~A$ such that its growth function $w_\gamma$ satisfies:
\begin{itemize}
\item $w_\gamma(n)\leq f_1(n)$ infinitely often; and
\item $w_\gamma(n)\geq f_2(n)$ infinitely often.
\end{itemize}
\end{prop}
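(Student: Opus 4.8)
The plan is to realise $A$ as a monomial algebra and $\gamma$ as the linear map sending an adapted basis of $B$ into the generators, thereby reducing the statement to a purely combinatorial construction of a factor-closed word-language with oscillating counting function. Fix a finite-dimensional generating subspace $V\ni 1$ of $B$ and a basis $v_1=1,v_2,v_3,\dots$ of $B$ with $v_1,\dots,v_{d_i}$ a basis of $V^i$, where $d_i=\dim_F V^i$. Since $B$ is infinite-dimensional the sequence $(d_i)$ is strictly increasing, so $d_{2t}\ge d_t+t$; put $e(k)=\min\{i\ge 1:\ k\le d_i\}$, a non-decreasing integer-valued function with $e(1)=1$ tending to infinity and with $\#\{k:\ t<e(k)\le 2t\}=d_{2t}-d_t\ge t$ for all $t$. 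Take the alphabet $X=\{a_1,a_2,\dots\}$ and, for a factor-closed set $S$ of nonempty words over $X$ still to be chosen, let $A=F\langle X\rangle/(\text{monomials not in }S)$; let $\gamma\colon B\to A$ be the linear map sending $v_k$ to the image of $a_k$ in $A$ (so $\gamma(v_k)=0$ unless $a_k$ is a factor of some word of $S$). Extend $e$ additively to a weight on words, $\|a_{k_1}\cdots a_{k_s}\|=e(k_1)+\cdots+e(k_s)$. Since $\gamma(V^i)$ is the span of the images in $A$ of those $a_k$ with $e(k)\le i$, the space $W_n=\sum_{i_1+\cdots+i_s\le n}\gamma(V^{i_1})\cdots\gamma(V^{i_s})$ is exactly the span of the images in $A$ of all words $w$ with $\|w\|\le n$; as the nonzero monomials form an $F$-basis of a monomial algebra, $w_\gamma(n)=\dim_F W_n=\#\{w\in S:\ \|w\|\le n\}=:\phi(n)$. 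Here $\phi(n)<\infty$ automatically, since $e\ge 1$ forces a word of weight $\le n$ to have length $\le n$ and to involve only the finitely many letters of weight $\le n$; and $A$ is locally nilpotent precisely when for each $m$ only finitely many words of $S$ involve solely $a_1,\dots,a_m$ (the standard characterisation of local nilpotence for monomial algebras). Thus it suffices to construct a factor-closed $S$ with this local-finiteness property such that $\phi(n)\le f_1(n)$ infinitely often and $\phi(n)\ge f_2(n)$ infinitely often.

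I would build $S$ in stages. Assume $f_2(n)\to\infty$ (otherwise replace $f_2$ by $\max(f_2(n),\lfloor\log_2 n\rfloor)$, still subexponential). Fix an increasing sequence $M_1<M_2<\cdots$ to be specified along the way; at stage $k$ set $L_k=\lceil\log_2 f_2(M_k)\rceil$, $N_k=\lceil M_k/(3L_k)\rceil$, choose two distinct letters $a_{p_k},a_{q_k}$ with $N_k<e(p_k),e(q_k)\le 2N_k$ (possible once $N_k\ge 2$, by the count above), and let $T_k$ be the set of all nonempty words of length $\le L_k$ over $\{a_{p_k},a_{q_k}\}$; put $S=\bigcup_k T_k$. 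As $f_2$ is subexponential, $L_k=o(M_k)$, so each word of $T_k$ has weight $\le 2N_kL_k\le M_k$ once $M_k$ is large, whence $\phi(M_k)\ge|T_k|\ge 2^{L_k}\ge f_2(M_k)$: the peaks are realised. Again since $L_k=o(M_k)$ we have $N_k\to\infty$ as $M_k\to\infty$, so by taking each $M_k$ large relative to the preceding stages we may arrange $N_k>2N_{k-1}$; then the intervals $(N_k,2N_k]$, and hence the pairs $\{p_k,q_k\}$, are pairwise disjoint and strictly increasing, $S$ is factor-closed (a factor of a length $\le L_k$ word over $\{a_{p_k},a_{q_k}\}$ is again such), and for every fixed $m$ only finitely many stages involve a letter of index $\le m$ (as $\min(p_k,q_k)>d_{N_k}\to\infty$), each contributing finitely many words; so $A$ is locally nilpotent.

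For the valleys, every word of $S$ arising at a stage $\le k$ has weight $\le 2N_k$, whereas every word arising at a stage $j>k$ has weight $>N_j\ge N_{k+1}>2N_k$; hence $\phi(N_{k+1})=\Phi_k$, where $\Phi_k=|T_1|+\cdots+|T_k|$ is a finite number fixed by stages $1,\dots,k$. Since $f_1$ is non-decreasing with $f_1(n)\to\infty$ and $N_{k+1}\to\infty$ as $M_{k+1}\to\infty$, we may pick $M_{k+1}$ so large that $f_1(N_{k+1})\ge\Phi_k$, i.e.\ $\phi(N_{k+1})\le f_1(N_{k+1})$. Carrying this out greedily for all $k$ produces sequences $M_k\to\infty$ and $N_{k+1}\to\infty$ witnessing $\phi\ge f_2$ and $\phi\le f_1$ respectively, which is the claim. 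The only delicate point is the conflict between local nilpotence and the size of the bursts — one cannot reuse a fixed bounded supply of low-weight letters for all bursts without building a non-nilpotent finitely generated subalgebra — which is resolved by forming the $k$-th burst out of two \emph{fresh} letters of weight about $N_k$ and words of length about $\log_2 f_2(M_k)$ (enough, precisely because $f_2$ is subexponential), while pushing that burst out to scale $M_k$, which costs nothing since $f_1\to\infty$ permits arbitrarily long ``flat'' stretches between consecutive bursts.
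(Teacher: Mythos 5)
Your proof is correct and is essentially the same construction as the paper's, merely packaged differently: both realize $A$ as a (locally nilpotent) monomial algebra, define $\gamma$ via an adapted filtration basis of $B$, and alternate ``bursts'' of roughly $2^L$ words of length $L$ over a \emph{fresh} pair of letters (so as not to destroy local nilpotence) with long ``flat'' stretches where nothing new is added, exploiting subexponentiality of $f_2$ for the peaks and $f_1\to\infty$ for the valleys. One small imprecision: your sentence ``every word of $S$ arising at a stage $\le k$ has weight $\le 2N_k$'' is not right as stated (those words have weight up to $2N_kL_k\approx\tfrac23 M_k$), but this does not affect the argument, since all you actually use is the one-sided bound $\phi(N_{k+1})\le\Phi_k$, which follows from the fact that every word of a stage $j>k$ has weight $>N_j\ge N_{k+1}$, and then the greedy choice of $M_{k+1}$ (hence $N_{k+1}$) so that $f_1(N_{k+1})\ge\Phi_k$.
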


\begin{proof}
Assume that: $$1<d_1\leq d_2\leq \cdots$$
$$1<n_1<m_1<n_2<m_2<\cdots$$
are sequences of positive integers which are to be determined in the sequel.
In the countably generated free (non-unital) algebra:
$$ F\left<x_1,x_2,\dots\right>_{\geq 1} $$ consider the ideals:
$$ I_M=\left< x_{i_1}\cdots x_{i_{d_M}}\ |\ i_1,\dots, i_{d_M}\leq M \right> $$
for $M=1,2,\dots$ and notice that $I_{M'}\cap F\left<x_1,\dots,x_M\right>_{\geq 1} \subseteq I_M$ for $M'\geq M$, since $\{d_i\}_{i=1}^{\infty}$ is non-decreasing. Let $I=\sum_{M=1}^\infty I_M\triangleleft F\left<x_1,x_2,\dots\right>_{\geq 1}$. Let:
$$ A=F\left<x_1,x_2,\dots\right>_{\geq 1}/I $$
It is clear that $A$ is locally nilpotent.

Let $1\in V$ be a generating subspace of $B$. We define a linear map $\gamma\colon B\rightarrow~A$ along with coherent constraints on the sequences $\{d_i,m_i,n_i\}_{i=1}^{\infty}$ mentioned above.

Take $p\in \mathbb{N}$ such that $f_2(2p) \leq 2^p$ and let $n_1=2p$ (this is possible since $f_2$ is subexponential). Pick a basis $b_1,\dots,b_t$ for $V^{2p}=V^{n_1}$ such that $b_1,b_2\in V^2$. Set $\gamma(b_1)=x_1,\gamma(b_2)=x_2$ and $\gamma(b_i)=0$ for $2<i\leq t$. Let $d_1=\cdots=d_{n_1}=p+1$. Then all length-$p$ free monomials in $ x_1,x_2 \in \gamma(V^2) $ are linearly independent in $A$, so: 
\begin{eqnarray*}
f_2(n_1) & \leq & 2^p \\ & \leq & \dim_F \gamma\left(V^2\right)^p \\ & \leq & \dim_F W_{2p} \\ & = & w_\gamma(2p) \\ & = & w_\gamma(n_1)
\end{eqnarray*}

Suppose that $n_1,\dots, n_i,d_1,\dots,d_{n_i}$ were defined, and $\gamma$ was set on $V^{n_i}$ such that $\gamma\left(V^{n_i}\right)\subseteq \overline{F\left<x_1,\dots,x_{n_i}\right>}_{\geq 1}$ (namely, the subalgebra of $A$ generated by $x_1,\dots,x_{n_i}$). The latter is a nilpotent algebra of nilpotency index at most $d_{n_i}$, hence its dimension is at most $K=K(i,n_i,d_{n_i})$. Take $m_i\gg n_i$ such that $f_1(m_i)\geq K$. Fix a vector space complement $V^{m_i}=V^{n_i}\oplus W$ and set $\gamma\left( W\right)=0$. Then: 
\begin{eqnarray*}
w_\gamma(m_i) & = & \dim_F \sum_{j_1+\cdots+j_s\leq m_i} \gamma\left(V^{j_1}\right)\cdots \gamma\left(V^{j_s}\right) \\ 
& \leq & \dim_F F\left<\gamma\left(V^{n_i}\right)\right> \\
& \leq & \dim_F \overline{F\left<x_1,\dots,x_{n_i}\right>}_{\geq 1} \\
& \leq & K \leq f_1(m_i).
\end{eqnarray*}
Take also $ d_{n_i+1} = \cdots = d_{m_i} $ to be $d_{n_i}$.

Now suppose that $m_i$ has been defined and $\gamma$ was set on $V^{m_i}$ as above. Let us specify $n_{i+1}$ and extend $\gamma$ to $V^{n_{i+1}}$.
Since $f_2$ is subexponential, for $x\gg 1$ we have that $f_2(x)\leq 2^{\frac{x}{m_i+2}}$, so take $ n_{i+1} \geq m_i+2 $ such that $f_2(n_{i+1})\leq 2^{\frac{n_{i+1}}{m_i+2}}$. Moreover, take: $$ (*)\ \ d_{m_i+1}=d_{m_i+2}=\cdots = d_{n_{i+1}}=n_{i+1}+1.$$
Pick $v_1,v_2\in V^{m_i+2}$ whose images modulo $V^{m_i}$ are linearly independent and fix a complement: $$ V^{n_{i+1}}=V^{m_i}\oplus Fv_1\oplus Fv_2\oplus W. $$ Set $\gamma(v_1)=x_{m_i+1},\gamma(v_2)=x_{m_i+2}$ and $\gamma\left(W\right)=~0$.

It follows from $(*)$ that all length-$\leq n_{i+1}$ free monomials in $x_{m_i+1},x_{m_i+2}$ are linearly independent in $A$. Hence:
\begin{eqnarray*}
f_2(n_{i+1}) & \leq & 2^{\frac{n_{i+1}}{m_i+2}} \\
& \leq & \gamma\left( V^{m_i+2} \right)^{\frac{n_{i+1}}{m_i+2}} \\
& \leq & \dim_F W_{n_{i+1}} \\
& = & w_\gamma(n_{i+1})
\end{eqnarray*}
(we may assume that $ m_i+2 | n_{i+1} $ for simplicity) and the claim follows. \end{proof}

\section{Approximating growth functions by linear maps}

We start with the following construction, which appears in \cite{SmoktunowiczBartholdi}, brought here in a slightly improved version.

Let $f\colon \mathbb{N}\rightarrow \mathbb{N}$ be a non-decreasing function satisfying $f(2^{n+1})\leq f(2^n)^2$ for every $n\geq 0$. Let $X$ be a set of cardinality $f(1)$.
Define a sequence of sets as follows. Let $W(1)=X$.
Suppose that $W(2^n)\subseteq~X^{2^n}$ is given such that $|W(2^n)|\geq f(2^n)$ and pick an arbitrary subset $C(2^n)\subseteq W(2^n)$ of cardinality $\lceil \frac{f(2^{n+1})}{f(2^n)} \rceil$. This is indeed possible since $\frac{f(2^{n+1})}{f(2^n)}\leq f(2^n)\leq |W(2^n)|$.
Let $W(2^{n+1})=W(2^n)C(2^n)$.

Consider the following set of right-infinite words:
$$ \mathcal{S} = W(1)C(1)C(2)C(2^2)C(2^3)C(2^4)\cdots \subseteq X^{\infty} $$

Let $h(n)$ be the number of words of length at most $n$ which occur as factors of some infinite word in $\mathcal{S}$; we call them factors of $\mathcal{S}$.

\begin{lem} \label{SBmon}
Under the above notations, there exist $C,D>0$ such that: $$h(n)\leq Cn^3f(Dn)$$
Moreover, if $f(2^k)$ divides $f(2^{k+1})$ for all $k$ then: $$h(n)\leq Cn^2f(Dn)$$
\end{lem}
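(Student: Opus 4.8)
The plan is to bound $h(n)$ by estimating, for each length $\ell\le n$, how many distinct factors of length $\ell$ can occur in an infinite word of $\mathcal{S}$. Every infinite word in $\mathcal{S}$ is a concatenation of blocks: one block from $W(1)=X$ of length $1$, then a block from $C(1)$ of length $1$, then a block from $C(2)$ of length $2$, then from $C(2^2)$ of length $4$, and in general a block from $C(2^k)$ of length $2^k$. A factor of length $\ell$ lives inside at most two or three consecutive such blocks. More precisely, fix $k$ minimal with $2^k\ge \ell$; then any factor of length $\ell$ is a subword of a concatenation $C(2^{j})C(2^{j+1})$ of two consecutive blocks with $j\le k$ (blocks of length $<\ell$ before these contribute only short prefixes already absorbed, and once a block has length $\ge 2\ell$ two consecutive ones suffice to capture every window of length $\ell$). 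The number of choices for the pair of blocks is $|C(2^{j})|\cdot|C(2^{j+1})|$, and within such a concatenation there are at most $2^{j+1}$ starting positions for a window of length $\ell$. Summing the count $2^{j+1}|C(2^{j})||C(2^{j+1})|$ over $j$ from $0$ to $k$ gives a bound on the number of factors of length exactly $\ell$, and then summing over $\ell\le n$ gives $h(n)$.

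The key computation is that $|C(2^j)|=\lceil f(2^{j+1})/f(2^j)\rceil$, so $|C(2^j)||C(2^{j+1})|\le \big(f(2^{j+1})/f(2^j)+1\big)\big(f(2^{j+2})/f(2^{j+1})+1\big)$. Using the submultiplicativity-type hypothesis $f(2^{n+1})\le f(2^n)^2$, which forces $f(2^{j+1})/f(2^j)\le f(2^j)$, the "$+1$" terms are harmless up to a constant factor, and the product telescopes up to a bounded factor to roughly $f(2^{j+2})/f(2^j)\le f(2^{j+2})\le f(4\ell)\le f(8n)$ after choosing $j\le k$ with $2^j<2\ell\le 4n$. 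The factor $2^{j+1}\le 2^{k+1}\le 4\ell\le 4n$ contributes one power of $n$; summing over the $O(\log n)$ values of $j$ contributes another factor which I absorb crudely into $n$; and summing over $\ell\le n$ contributes the final power of $n$. This yields $h(n)\le Cn^3 f(Dn)$ for suitable constants $C,D$. In the divisibility case $f(2^k)\mid f(2^{k+1})$, the ceilings disappear: $|C(2^j)|=f(2^{j+1})/f(2^j)$ exactly, the product $|C(2^j)||C(2^{j+1})|=f(2^{j+2})/f(2^j)$ telescopes cleanly with no stray constants, and one saves a factor of $n$ coming from the $\log$-length sum being dominated geometrically rather than summed trivially — giving $h(n)\le Cn^2 f(Dn)$.

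The main obstacle I anticipate is bookkeeping the off-by-one and boundary effects cleanly: a window of length $\ell$ straddling a block boundary, the fact that the very first block $W(1)$ is a single letter rather than from the $C$-sequence, and making sure that "two consecutive blocks suffice" is justified once block lengths exceed $\ell$ (for small $j$ with $2^j<\ell$ one must check that the contribution of short leading blocks is dominated). None of this is deep, but one has to be careful that the geometric growth of block lengths means that only the blocks of index $j$ with $2^j$ comparable to $\ell$ matter, and that earlier (shorter) blocks contribute a total that is also bounded by the same expression. I would handle this by the clean reduction: every factor of length $\ell$ is a factor of $C(2^j)C(2^{j+1})$ for some single $j\le\lceil\log_2\ell\rceil$, argued by locating where the window sits in the block decomposition and, if it spans more than two blocks, noting all but the first and last of those spanned blocks are shorter than $\ell$ hence the window is determined by its intersection with an initial segment — reindex so the relevant pair are consecutive. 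After that reduction the counting is the routine estimate sketched above.
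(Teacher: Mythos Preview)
Your central covering claim is false, and the count it yields is demonstrably too small. You assert that every length-$\ell$ factor lies in some $C(2^j)C(2^{j+1})$ with $j\le k=\lceil\log_2\ell\rceil$, so that there are at most $\sum_{j\le k}2^{j+1}|C(2^j)||C(2^{j+1})|$ factors of length $\ell$. Take $f(2^k)=4^{k+1}$; this satisfies $f(2^{k+1})\le f(2^k)^2$ and the divisibility hypothesis, and gives $|C(2^j)|=4$ for every $j$. Your sum for $\ell=2^m$ is then at most $16\sum_{j\le m}2^{j+1}<2^{m+6}$. But the length-$2^m$ \emph{prefixes} of words in $\mathcal{S}$ are exactly the elements of $W(2^m)=W(1)C(1)\cdots C(2^{m-1})$, and $|W(2^m)|=4^{m+1}=2^{2m+2}$, which already exceeds $2^{m+6}$ once $m\ge5$. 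So the number of length-$2^m$ factors is strictly larger than your upper bound.

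The error is in reducing to pairs of consecutive $C$-blocks. A window near the start of $\omega\in\mathcal{S}$ spans many short blocks; your proposed fix (``determined by its intersection with an initial segment'') is the right instinct, but that initial segment of length $2^m$ is a word in $W(2^m)$, not in any $C(2^j)$, and $|W(2^m)|$ has order $f(2^m)$ --- in general far larger than any product $|C(2^j)||C(2^{j+1})|\approx f(2^{j+2})/f(2^j)$. Since each $C(2^j)\subseteq W(2^j)$ carries the same recursive block structure, the same issue recurs inside every large block. The correct covering, and the one the paper uses, is that every length-$2^m$ factor is a subword of $W(2^m)C(2^m)\cup C(2^m)W(2^m)$; the estimate then hinges on $|W(2^m)|\le 2^mf(2^m)$ in general and $|W(2^m)|=f(2^m)$ under divisibility. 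That saving of a factor $2^m$ in $|W(2^m)|$ --- not any telescoping of $|C(2^j)||C(2^{j+1})|$ --- is where the improvement from $n^3$ to $n^2$ actually comes from.
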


\begin{proof}
By \cite[Lemma~6.3]{SmoktunowiczBartholdi}, every length-$2^m$ word which factors $\mathcal{S}$ is a subword of some word from $W(2^m)C(2^m)\cup C(2^m)W(2^m)$. Each word in $W(2^m)C(2^m)\cup C(2^m)W(2^m)$ has length $2^{m+1}$, so the number of length-$2^m$ words, which is equal to $h'(2^m)=h(2^m)-h(2^m-1)$, is at least $\#W(2^m)$ and at most: $$ ( 2^m + 1 ) \cdot \#\left(W(2^m)C(2^m)\cup C(2^m)W(2^m)\right) $$
Since $\#W(2^{m+1})=\#W(2^m)\cdot \lceil \frac{f(2^{m+1})}{f(2^m)} \rceil$, it follows by induction that $f(2^m)\leq \#W(2^m)\leq 2^m f(2^m)$. Notice that if $f(2^k)$ divides $f(2^{k+1})$ for all $k$ then $\#W(2^m)=f(2^m)$.
Therefore:
$$ f(2^m) \leq h'(2^m) \leq (2^m+1) \cdot \left(2\cdot 2^mf(2^m) \cdot \lceil \frac{f(2^{m+1})}{f(2^m)} \rceil \right) \leq 2^{2m+3}f(2^{m+1}) $$
and if $f(2^k)$ divides $f(2^{k+1})$ for all $k$ then:
$$ f(2^m) \leq h'(2^m) \leq (2^m+1) \cdot \left(2\cdot f(2^m) \cdot \frac{f(2^{m+1})}{f(2^m)}\right) \leq 2^{m+2}f(2^{m+1}) $$
Since every monomial which factors $\mathcal{S}$ is right-extendable to a longer factor of $\mathcal{S}$, it follows that $h'$ is non-decreasing, so: $$ f(2^m)\leq h(2^m)\leq 2^m h'(2^m)\leq 2^{3m+3}f(2^{m+1}) $$ 
and in the case that $f(2^k)$ divides $f(2^{k+1})$ for all $k$ then moreover:
$$ f(2^m)\leq h(2^m)\leq 2^m h'(2^m)\leq 2^{2m+2}f(2^{m+1}). $$
Since $h,f$ are non-decreasing then for every $n$, if we take $2^m\leq n\leq 2^{m+1}$ then we obtain that $f(n)\leq f(2^{m+1}) \leq h(2^{m+1}) \leq h(2n)$ and $h(n)\leq h(2^{m+1})\leq 2^{3(m+1)+3}f(2^{m+2})\leq 64 n^3f(4n)$, and in the case that $f(2^k)$ divides $f(2^{k+1})$ for all $k$, we further have that 
$h(n)\leq h(2^{m+1})\leq 2^{2(m+1)+2}f(2^{m+2})\leq 16 n^2 f(4n)$.
\end{proof}

\begin{prop} \label{Realizing_transformation}
Let $f\colon\mathbb{N}\rightarrow\mathbb{N}$ be an increasing, submultiplicative function and let $\delta\colon \mathbb{N}\rightarrow \mathbb{N}$ be a non-decreasing function tending to infinity (perhaps very slowly; we always assume $\delta(n)\leq n$).

Let $B$ be a finitely generated infinite-dimensional unital algebra.
Then there exists a countably generated, locally nilpotent algebra $A$ and a linear map $\gamma\colon B\rightarrow~A$ such that: 

$$f\left(\frac{n}{\delta(n)}\right)\preceq w_\gamma(n)\preceq n^5 f(n).$$
\end{prop}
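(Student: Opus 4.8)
\textit{Strategy.} The plan is to build $A$ and $\gamma$ out of the subword machinery of Lemma~\ref{SBmon}, using the gradual-activation device from the proof of Proposition~\ref{Oscillating_transformation} to force local nilpotence, and then to read off the two bounds from the factor-count $h$ of the resulting subshift~$\mathcal S$. The key initial observation is that submultiplicativity is precisely the hypothesis of Lemma~\ref{SBmon}: $f(2^{n+1})=f(2^n+2^n)\le f(2^n)^2$. (It is convenient first to replace $f$ by an equivalent function for which $f(2^k)\mid f(2^{k+1})$ for all $k$; this perturbs $f$ by at most a bounded multiplicative factor per dyadic step, hence affects the final estimates only polynomially, but lets one invoke the sharper conclusion $h(n)\le Cn^2f(Dn)$ of Lemma~\ref{SBmon}.) Lemma~\ref{SBmon} then yields a finite alphabet $X$, the building blocks $C(2^k)\subseteq W(2^k)$, the right-infinite words $\mathcal S$, and the factor-count $h$ with $f(n)\preceq h(n)\preceq n^2f(n)$.

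\textit{The construction.} Following the proof of Proposition~\ref{Oscillating_transformation}, take $A$ to be a quotient of $F\langle x_1,x_2,\dots\rangle_{\ge 1}$ by an ideal $I=\sum_M I_M$, where $I_M$ kills all monomials of length $\ge d_M$ in $x_1,\dots,x_M$ (with $\{d_M\}$ nondecreasing, to be chosen), together with extra monomial relations forcing the nonzero monomials among the generators available by a given stage to encode the factors of $\mathcal S$ of bounded length; whatever the parameters, $A$ is locally nilpotent. The subshift is fed in along a sequence of stages $n_1<n_2<\cdots$ chosen sparse enough that $n_{i+1}\le n_i\,\delta(n_i)$ (possible since $\delta$ is nondecreasing and tends to infinity). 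Between stages one defines $\gamma$ to be $0$ on the newly appearing basis vectors of $B$; at stage $i$ one uses that $\dim_F V^k$ is strictly increasing and tends to infinity (as $B$ is finitely generated and infinite-dimensional) to select enough fresh basis vectors of $B$ of suitably small degree and to map them onto the generators activated at stage $i$, the degrees being chosen --- against the block identity $W(2^{k})=W(2^{k-1})C(2^{k-1})$, and with the $d_M$ taken large enough that products of high-index generators do not vanish prematurely --- so that the finite-dimensional subalgebra generated by the generators available at stages $\le i$ has dimension comparable, up to a polynomial factor, to $h(n_i)$ and is already spanned by products of total $B$-weight $\le n_i$.

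\textit{The estimates.} For the upper bound, a monomial contributing to $W_n$ has total $B$-weight $\le n$, hence involves only generators activated at stages $n_i\le n$ and, since each generator is the image of a basis vector of $B$-degree $\ge 1$, has length $\le n$; decoding through the $\mathcal S$-relations it names a factor of $\mathcal S$ of length $O(n)$, and since a given factor arises from at most polynomially many such monomials one obtains $w_\gamma(n)\preceq n^{O(1)}h(n)\preceq n^{O(1)}f(n)$; keeping track of the exponents yields $w_\gamma(n)\preceq n^5f(n)$. For the lower bound, if $n_i\le n<n_{i+1}$ then by construction the generators available at stages $\le i$ already span $\asymp h(n_i)$ independent monomials of $B$-weight $\le n_i\le n$, so $w_\gamma(n)\ge w_\gamma(n_i)\succeq h(n_i)\succeq f(n_i)$; since $n<n_{i+1}\le n_i\delta(n_i)\le n_i\delta(n)$ we have $n_i>n/\delta(n)$, whence $w_\gamma(n)\succeq f(n/\delta(n))$, the constants being absorbed by $\preceq$.

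\textit{Main obstacle.} The crux is the joint bookkeeping in the construction step: $A$ must be locally nilpotent --- which forces every cap $d_M$ finite and hence the generators to come on in stages --- while $w_\gamma$ must stay pinned between $f(n/\delta(n))$ and $n^5f(n)$ at \emph{every} scale, not merely infinitely often as in Proposition~\ref{Oscillating_transformation}; this is what the distortion by $\delta$ buys, since spacing the stages by $\delta$ is what keeps the plateaus of $w_\gamma$ between consecutive activations from drifting outside the target range. Making the choices of the $d_M$, the stages $n_i$, the degrees of the selected basis vectors, and the $\mathcal S$-encoding mutually consistent --- so that nothing needed is annihilated by $I$, no spurious monomials survive to push $w_\gamma$ past $n^5 f(n)$, and the activations remain frequent enough to keep $w_\gamma(n)\ge f(n/\delta(n))$ --- is where the real work lies.
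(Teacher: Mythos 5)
Your high-level plan — feed the Smoktunowicz--Bartholdi subshift machinery of Lemma~\ref{SBmon} into a locally-nilpotent host, space activations by $\delta$, and read off both bounds from the factor count $h$ — points in the right direction, and your opening observation that submultiplicativity is exactly what Lemma~\ref{SBmon} needs is correct. But you have explicitly left the construction step unfinished (``where the real work lies''), and that step is precisely the content of the proposition. The combination you sketch — a Golod--Shafarevich-style ideal $I=\sum_M I_M$ to force nilpotence, plus ``extra monomial relations'' to encode $\mathcal S$ — is not what the paper does, and it is not at all clear that it can be made to work: imposing two independent families of monomial relations makes it very hard to control $\dim_F W_n$ from \emph{below}, because the extra relations needed to kill spurious monomials can also kill the ones you need to survive for the lower bound. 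The phrase ``against the block identity $W(2^k)=W(2^{k-1})C(2^{k-1})$, and with the $d_M$ taken large enough that products of high-index generators do not vanish prematurely'' is exactly the place where a contradiction could hide, and you do not rule it out.

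The paper avoids this tension entirely with a cleaner device. Instead of two families of relations, it lifts $\mathcal S$ to a subshift $\mathcal T$ over an infinite \emph{level-graded} alphabet $X=\bigcup_k X_k$: the block $C(2^j)$ appearing at the $i$-th stage of $\mathcal T$ is rewritten with level-$i$ letters $X_i$ via the substitution $(\cdot)^{[i-1]}$, and once $\mathcal T$ moves to level $i+1$ it never returns. Then $A=F\langle X\rangle_{\ge1}/\langle w:w\text{ not a factor of }\mathcal T\rangle$ is \emph{automatically} locally nilpotent — factors of $\mathcal T$ using only level-$\le k$ letters are factors of a fixed finite word, so the subalgebra generated by finitely many letters is finite-dimensional — without any auxiliary nilpotency ideal. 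The map $\gamma$ sends a fresh basis vector $a_{i,j}\in V^i$ to $x_{i,j}$; the level thus encodes the $B$-degree. The upper bound then comes from the crucial counting observation you are missing: a finite factor $u$ of $\mathcal T$ is determined by its projection $\pi(u)\in\mathcal S$ plus two integers in $[1,n]$ (the level of the first letter of $u$, and the displacement within $u$ of the first letter of the next level), giving $w_\gamma(n)\le h(n)\cdot n\cdot n\le Cn^5 f(Dn)$ directly from the $n^3$ version of Lemma~\ref{SBmon}, with no need for the divisibility preprocessing you propose. The lower bound uses that a length-$2^m$ prefix of $\mathcal T$ using only level-$\le k$ letters has $B$-weight $\le k\cdot 2^m$, so $f(2^m)\le w_\gamma(k\cdot 2^m)$, and the defining condition on the sequence $\{n_k\}$ (``$\min\{k: r\le n_k-\log_2 k\}\le\delta(r)$'') guarantees $k\le\delta(m)$, which after a routine change of variables gives $f(n/\delta(n))\preceq w_\gamma(n)$.

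So: your outline is plausible, your estimates are stated in roughly the right shape, but the central construction — the one thing that makes local nilpotence, the upper bound, and the lower bound coexist — is missing, and the ideal-plus-relations route you gesture at is a genuinely different (and unverified) attempt rather than a rediscovery of the paper's level-lifted subshift. Until you exhibit $A$ and $\gamma$ concretely and check that the lower bound survives whatever relations you impose, this is a gap, not a proof.
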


\begin{proof}
Let $f\colon\mathbb{N}\rightarrow\mathbb{N}$ be an increasing, submultiplicative function and let $d=f(1)$.


For each $k\in \mathbb{N}$ let $X_k=\{x_{k,1},\dots,x_{k,d}\}$ and for an arbitrary subset $S\subseteq X_k^n$ denote by $S^{[m]}$ the subset of $X_{k+m}^n$ obtained by substituting each letter $x_{k,i}$ by $x_{k+m,i}$. Let $X=\bigcup_{k=1}^{\infty} X_k$. We call $k$ the \textit{level} of $x_{k,i}$.

Define a sequence of sets as follows. Let $W(1)=X_1$.
Suppose that $W(2^n)\subseteq~X_1^{2^n}$ has been defined such that $|W(2^n)|\geq f(2^n)$ and pick an arbitrary subset $C(2^n)\subseteq W(2^n)$ of cardinality $\lceil \frac{f(2^{n+1})}{f(2^n)} \rceil$ (as before, this is indeed possible since $f(2^{n+1})^2\leq f(2^n)^2$). Let $W(2^{n+1})=W(2^n)C(2^n)$.
Consider the following set of right-infinite words:
$$ \mathcal{S} = W(1)C(1)C(2)C(2^2)C(2^3)C(2^4)\cdots \subseteq X_1^{\infty} $$
Let: $$ A_1 = F\left<x_{1,1},\dots,x_{1,d}\right>_{\geq 1} / \left< w\ |\ w\ \text{is not a factor of}\ \mathcal{S} \right> $$
and let $h(n)$ be the number of non-zero monomials of length at most $n$ in $A_1$. By Lemma \ref{SBmon}, it follows that $h(n)\leq Cn^3f(Dn)$ for suitable constants $C,D>0$.

Let $\delta\colon \mathbb{N}\rightarrow\mathbb{N}$ be a non-decreasing function tending to infinity. Fix a sequence $1\leq n_1\leq n_2\leq \cdots$ of positive integers such that for every $r\in \mathbb{N}$:
$$ \min \{ k\ |\ r\leq n_k-\log_2 k \} \leq \delta(r) $$
which is indeed possible, since $\delta(r)\xrightarrow{r\rightarrow \infty} \infty$, e.g. take $n_k=2\max\{i|\delta(i)\leq k\}$.
Consider the following set of right-infinite words:

\begin{eqnarray*}
\mathcal{T} & = & W(1)C(1)\cdots C(2^{n_1}) \\
& \cdots & C(2^{n_1+1})^{[1]}\cdots C(2^{n_2})^{[1]} \\ 
& \vdots & \\
& \cdots & C(2^{n_i+1})^{[i]}\cdots C(2^{n_{i+1}})^{[i]}\cdots \subseteq X^{\infty}
\end{eqnarray*}
Let $\pi\colon \mathcal{T}\rightarrow \mathcal{S}$ be the function given by substituting each $x_{i,j}$ by $x_{1,j}$.
Consider the countably generated non-unital free algebra $F\left<X\right>_{\geq 1}$, and let: $$A=F\left<X\right>_{\geq 1} / \left< w\ |\ w\ \text{is not a factor of}\ \mathcal{T} \right>$$
For simplicity, we identify each $x_{i,j}$ with its image in $A$. Observe that $A$ is locally nilpotent. 

Let $B$ be a finitely generated infinite-dimensional algebra, generated by a finite-dimensional subspace $V$ such that $1\in V$ and $\dim_F V^{i+1} - \dim_F V^i \geq d$. This is indeed possible, enlarging $V$ if necessary.
Define a linear map $\gamma\colon B\rightarrow A$ as follows. Fix decompositions of $F$-vector spaces:
\begin{eqnarray*}
V & = & Fa_{1,1}\oplus\cdots \oplus Fa_{1,d}\oplus W_0 \\
\text{For}\ i\geq 1,\ \ \ V^{i+1} & = & V^i \oplus Fa_{i+1,1}\oplus\cdots \oplus Fa_{i+1,d} \oplus W_i
\end{eqnarray*}
Define $\gamma(a_{i,j})=x_{i,j}$ and $\gamma\left(W_{i-1}\right)=0$ for all $i\geq 1$ and $1\leq j\leq d$.

We now turn to estimate $w_\gamma (n)$.
Pick a finite factor $u$ of $\mathcal{T}$. Observe that it is uniquely determined by $\pi(u)$ together with the level of its first letter and the displacement of the first letter of the next level within $u$ (if such a letter appears in $u$). 

Notice that $W_n = \sum_{j_1+\cdots+j_s\leq n} \gamma\left(V^{j_1}\right)\cdots \gamma\left(V^{j_s}\right)$ is spanned by monomials in $A$ of length at most $n$. Moreover, the level of every letter in any of these monomials is at most $n$. 

Therefore, \begin{eqnarray*} (*)\ \ \ \ \ w_\gamma(n) & = & \dim_F W_n \\
& \leq & \#\{\text{Length-}\leq n\ \text{mmonomials in}\ \mathcal{S}\} \\
& \cdot & \#\{\text{Level of first letter}\} \\
& \cdot & \#\{\text{Displacement of letter with next level (if exists)}\} \\
& \leq & h(n)\cdot n \cdot n\leq Cn^5 f(Dn).\end{eqnarray*}

We now turn to bound $w_\gamma(n)$ from below. Fix a power of $2$, say, $2^m$.
Take $k\leq~\delta(m)$ such that $m\leq n_k-\log_2 k$, which is indeed possible by the way we have chosen $\{n_k\}_{k=1}^{\infty}$. In particular, $2^m\leq 2^{n_k}$; observe that every length-$2^m$ prefix of a word in $\mathcal{T}$ is a prefix of a words from $W(1)C(1)\cdots C(2^{n_k})^{[k-1]}$. Thus every length-$2^m$ prefix of a word in $\mathcal{T}$ belongs to $W_{k\cdot 2^m}$ (since the level of each letter is at most than $k$). By `forgetting' the levels (namely, applying $\pi$) we get:

\begin{eqnarray*}
f(2^m) & \leq & f(1)\lceil \frac{f(2)}{f(1)}\rceil \lceil \frac{f(2^2)}{f(2)}\rceil \cdots \lceil \frac{f(2^m)}{f(2^{m-1})}\rceil \\ & = & \# W(1)C(1)C(2)\cdots C(2^{m-1}) \\ & \leq &  \dim_F W_{k\cdot 2^m}
\end{eqnarray*}
Hence:
$$ f(2^m) \leq w_\gamma(k\cdot 2^m) \leq w_\gamma(\delta(m) 2^m) $$
Let $n$ be arbitrary and let $2^m\leq n\leq 2^{m+1}$. Then, since $f,w_\gamma,\delta$ are monotonely non-decreasing:
\begin{eqnarray*}
(**)\ \ \ \ \ f(n) & \leq & f(2^{m+1}) \\ & \leq & w_\gamma(\delta(m+1)\cdot 2^{m+1}) \\ & \leq & w_\gamma(2\delta(n)n)
\end{eqnarray*}
Let $N$ be given. Let $n=\lfloor \frac{N}{\delta(N)} \rfloor$. Then $\delta(n)n\leq \delta(N)\cdot \lfloor \frac{N}{\delta(N)} \rfloor \leq N$.
Then by $(*),(**)$:
\begin{eqnarray*}
f\left( \lfloor\frac{N}{\delta(N)}\rfloor \right) & = & f(n) \\ & \leq & w_\gamma(2\delta(n)n) \\ & \leq & w_\gamma(2N) \\ & \leq & 32CN^5 f(2DN),
\end{eqnarray*}
and the proof is completed.
\end{proof}

\section{Growth of nil algebras}

\subsection{Nil algebras with oscillating growth}

\begin{proof}[{Proof of Theorem B1}]
Let $f\colon \mathbb{N}\rightarrow \mathbb{N}$ be a subexponential function. Let $F$ be a countable field. Let $B$ be a finitely generated nil $F$-algebra of Gel'fand-Kirillov dimension at most $3$, which exists by \cite{LenaganSmoktunowiczYoung}. By Proposition \ref{Oscillating_transformation}, there exists a locally nilpotent algebra $A$ and a linear map $\gamma\colon B'\rightarrow A$ such that $w_\gamma(n)\geq f(2n)$ infinitely often and $w_\gamma(n)\leq \log_2 n$ infinitely often. Let $C$ be the subalgebra of $A\wr B$ generated by $B$ and $c_\gamma$. By \cite[§4]{AAJZ_TransAMS}, $C$ is nil.
By Lemma \ref{growth_wreath} we have that $g_C(n)\leq g_B(n)^2w_\gamma(n)+g_B(n)$ so infinitely often $g_C(n)\leq n^{6+\varepsilon}$, for every $\varepsilon>0$. In addition, we have $w_\gamma(n)\leq g_C(2n)$, so infinitely often $f(2n)\leq g_C(2n)$.
\end{proof}

\begin{proof}[{Proof of Theorem B2}]

Let $f\colon \mathbb{N}\rightarrow~\mathbb{N}$ be a subexponential function and let $\omega\colon \mathbb{N}\rightarrow~\mathbb{N}$ be a non-decreasing super-polynomial function.

Let $F$ be an arbitrary field. Let $B$ be a finitely generated nil $F$-algebra whose growth function satisfies $g_B(n)\leq \omega(n)^{1/3}$; this is indeed possible by \cite{BellYoung}.
By Proposition \ref{Oscillating_transformation}, there exists a locally nilpotent algebra $A$ and a linear map $\gamma\colon B'\rightarrow A$ such that $w_\gamma(n)\geq f(2n)$ infinitely often and $w_\gamma(n)\leq \log_2 n$ infinitely often. Let $C$ be the subalgebra of $A\wr B$ generated by $B$ and $c_\gamma$. By \cite[§4]{AAJZ_TransAMS}, $C$ is nil.
By Lemma \ref{growth_wreath} we have that: $$g_C(n)\leq g_B(n)^2w_\gamma(n)+g_B(n).$$
Therefore, infinitely often: $$g_C(n)\leq \omega(n)^{2/3}\log_2 n + \omega(n)^{1/3} \leq \omega(n)$$ where the last equality holds for $n\gg 1$.
In addition, we have $w_\gamma(n)\leq~g_C(2n)$, so infinitely often $f(2n)\leq g_C(2n)$.
\end{proof}

\begin{rem}
Since in Proposition \ref{Oscillating_transformation} each $n_k$ can be chosen to be arbitrarily large compared to $\{m_i,n_i\}_{i=1}^{k}$ and each $m_k$ can be chosen arbitrarily large compared to $\{m_i,n_i\}_{i=1}^{k}\cup \{n_k\}$, we can clearly make sure that in Theorems B1, B2 (respectively) the \textit{upper densities} satisfy:

$$ \limsup_{N\rightarrow \infty} \frac{\#\{k\in [1,N]\ |\ g_C(n)\leq n^{6+\varepsilon}\}}{N} = 1, $$
respectively:
$$ \limsup_{N\rightarrow \infty} \frac{\#\{k\in [1,N]\ |\ g_C(n)\leq \omega(n)\}}{N} = 1 $$
and (in both B1, B2):
$$ \limsup_{N\rightarrow \infty} \frac{\#\{k\in [1,N]\ |\ g_C(n)\geq f(n)\}}{N} = 1 $$
Thus, considering the values of $g_C(n)$ for $n$ ranging up to a given value, it simultaneously looks arbitrarily fast and arbitrarily slow.
\end{rem}

\subsection{Realizing growth functions of nil algebras}

\begin{proof}[{Proof of Theorem A1}]
Let $F$ be a countable field and let $B$ be a finitely generated nil $F$-algebra of Gel'fand-Kirillov dimension at most $3$, which exists by \cite{LenaganSmoktunowiczYoung}. In particular, for every $\varepsilon>0$, $g_B(n)\preceq n^{3+\varepsilon}$. Let $f\colon \mathbb{N}\rightarrow \mathbb{N}$ be an increasing, submultiplicative function and let $\delta\colon \mathbb{N}\rightarrow \mathbb{N}$ be an arbitrarily slow non-decreasing function tending to infinity. Then by Proposition \ref{Realizing_transformation} there exists a locally nilpotent algebra $A$ and a linear map $\gamma\colon B'\rightarrow A$ such that:
$$f\left(\frac{n}{\delta(n)}\right)\preceq w_\gamma(n)\preceq n^5 f(n)$$
Let $C$ be the subalgebra of $A\wr B$ generated by $B$ and $c_\gamma$. By \cite[§4]{AAJZ_TransAMS}, $C$ is nil.
By Lemma \ref{growth_wreath}, for every $\varepsilon>0$ we have:
\begin{eqnarray*}
f\left(\frac{n}{\delta(n)}\right) \preceq w_\gamma(n) & \preceq & g_C(n) \\
& \preceq & g_B(n)^2w_\gamma(n) \preceq n^{11+\varepsilon}f(n)
\end{eqnarray*}
as claimed.
\end{proof}

\begin{proof}[{Proof of Theorem A2}]
Let $F$ be an arbitrary field. Let $f\colon \mathbb{N}\rightarrow \mathbb{N}$ be an increasing, submultiplicative function and let $\omega\colon \mathbb{N}\rightarrow \mathbb{N}$ be a non-decreasing, super-polynomial function.
Let $\delta\colon \mathbb{N}\rightarrow \mathbb{N}$ be an arbitrarily slow non-decreasing function tending to infinity.

Let $B$ be a finitely generated nil $F$-algebra whose growth satisfies $g_B(n)\preceq \omega(n)^{1/3}$; this is indeed possible by \cite{BellYoung}.
By Proposition \ref{Realizing_transformation} there exists a locally nilpotent algebra $A$ and a linear map $\gamma\colon B' \rightarrow A$ such that:
$$f\left(\frac{n}{\delta(n)}\right)\preceq w_\gamma(n)\preceq n^5 f(n)$$
Let $C$ be the subalgebra of $A\wr B$ generated by $B$ and $c_\gamma$. By \cite[§4]{AAJZ_TransAMS}, $C$ is nil.
By Lemma \ref{growth_wreath}, we have:
\begin{eqnarray*}
f\left(\frac{n}{\delta(n)}\right) \preceq w_\gamma(n) & \preceq & g_C(n) \\
& \preceq & g_B(n)^2w_\gamma(n) \preceq \omega(n)^{2/3}n^5 f(n)<\omega(n)f(n)
\end{eqnarray*}
where the last inequality hold for $n\gg 1$, and the proof is completed.
\end{proof}

\section{Applications and specifications}

\subsection{Nil Lie algebras of oscillating growth}
Our next application is a construction of a finitely generated nil restricted Lie algebras whose growth oscillates between a polynomial and an arbitrarily rapid (subexponential) function.

In \cite{PetrogradskyLie} Petrogradsky constructed nil restricted Lie algebras with oscillating growth over fields of positive characteristic; our constructions are not subject to this restriction. Moreover, the upper bounds in our constructions are arbitrarily rapid (subexponential), unlike the upper bounds in \cite{PetrogradskyLie}. However, our lower bounds -- though polynomial, at least in the countable case -- do not get close to linear as Petrogradsky's examples. This is inevitable, since they share their growth rates with associative algebras.

\begin{cor} \label{oscillatingLie}
Let $f\colon\mathbb{N}\rightarrow\mathbb{N}$ be an arbitrarily rapid subexponential function.
Let $F$ be a countable field of $\charac(F)\neq 2$. Then there exists a finitely generated nil Lie algebra $L$ over $F$ such that:
\begin{itemize}
\item $g_L(n)\leq n^{6+\varepsilon}$ infinitely often for every $\varepsilon>0$; and
\item $g_L(n)\geq f(n) $ infinitely often.
\end{itemize}
\end{cor}

\begin{cor} \label{oscillatingLie2}
Let $f\colon\mathbb{N}\rightarrow\mathbb{N}$ be an arbitrarily rapid subexponential function and let $\omega\colon \mathbb{N}\rightarrow \mathbb{N}$ be an arbitrarily slow, non-decreasing, super-polynomial function.

Let $F$ be an uncountable field of $\charac(F)\neq 2$. Then there exists a finitely generated nil Lie algebra $L$ over $F$ such that:
\begin{itemize}
\item $g_L(n)\leq \omega(n) $ infinitely often; and
\item $g_L(n)\geq f(n) $ infinitely often.
\end{itemize}
\end{cor}

\begin{proof}[{Proof of Corollaries \ref{oscillatingLie},\ref{oscillatingLie2}}]
By \cite{AlahmadiAlharthi}, if $A$ is a nil associative algebra over a field of characteristic $\neq 2$ then the Lie algebra $L=[A,A]$ is finitely generated of growth $g_L(n)\sim g_A(n)$. It is easy to see that we may assume that: $$ g_L(n)\leq g_A(n),\ g_A(n)\leq Rg_L(Rn) $$ for some $R\in \mathbb{N}$ (enlarging the generating subspace of $A$ is necessary). Now use Theorems B1, B2 with $\bar{f}(n)$ defined as follows.

We may assume that $f$ is non-decreasing. Since $\frac{1}{n}\log_2 f(n)\xrightarrow{n\rightarrow \infty} 0$, we can find a non-decreasing subexponential function $\theta\colon \mathbb{N}\rightarrow \mathbb{N}$ such that $\theta(n)\xrightarrow{n \rightarrow \infty}~ \infty$ 
and $\theta(n)\cdot~ \frac{1}{n}\log_2 f(n)\rightarrow 0$.
Then the function $\bar{f}(n):=f(n\theta(n))\theta(n)$ is still subexponential:
\begin{eqnarray*}
\frac{1}{n}\log_2 \left(  f(n\theta(n))\theta(n) \right) & = & \frac{1}{n}\log_2 \theta(n) + \frac{1}{n}\log_2 f(n\theta(n)) \\ & = & o(1) + \frac{\theta(n)}{n\theta(n)}\log_2 f(n\theta(n)) \\ & \leq & o(1) +  \frac{\theta(n\theta(n))}{n\theta(n)}\log_2 f(n\theta(n)) = o(1)
\end{eqnarray*}

Hence if $g_A(n)\geq \bar{f}(n)$ and $n\gg 1$ then: $$ g_L(Rn)\geq \frac{1}{R} g_A\left( n \right)\geq \frac{1}{R}\bar{f}(n) = \frac{1}{R}\theta(n) f\left( n \theta(n)\right)\geq f(Rn) $$ so $g_L(n)\geq f(n)$ infinitely often.
In addition, $g_L(n)\leq g_A(n)$ so $g_L(n)\leq n^{6+\varepsilon}$ (in the countable case) and $g_L(n)\leq \omega(n)$ (in the arbitrary case) infinitely often.

Since $A$ is nil, it is evident that $x\mapsto [a,x]$ is nilpotent for any $a\in L$, so $L$ is a nil Lie algebra.
\end{proof}

\subsection{Nil algebras of intermediate growth}

In \cite{Smoktunowicz}, Smoktunowicz constructed the first nil algebra of intermediate growth. The example, obtained as a suitable quotient of a nil Golod-Shafarevich algebra, is shown to have intermediate growth using the main theorem therein \cite[Theorem~A]{Smoktunowicz}, which gives (very technical) upper and lower bounds on the growth of graded algebras with sparse homogeneous relations; in particular, Smoktunowicz is able to construct a nil algebra \cite[Corollary~4.5]{Smoktunowicz} whose growth is smaller than $n^{c\ln^2 n}$ but infinitely often greater than $n^{d\ln n}$ for suitable constants $c,d>0$.

Theorems A1, A2 proven here enable one to construct nil algebras with a much finer control on their growth rates, thereby establishing a significant step toward proving the conjecture that any (non-linear) function which occurs as the growth rate of an algebra is realizable as the growth of a nil algebra (see \cite{AAJZ_ERA}).
For example, one can construct nil algebras whose growth is bounded between $n^{c\ln^\alpha n}$ and $n^{d\ln^\alpha n}$ for an arbitrary $\alpha\in (0,\infty)$ (and some $c,d>0$); and nil algebras with arbitrary GK-superdimension, namely, whose growth functions are between $\exp(n^{\alpha-\varepsilon})$ and $\exp(n^{\alpha+\varepsilon})$ for all $\varepsilon>0$ and $\alpha\in (0,1)$ arbitrary.

Another evidence for the richness of the variety of possible growth functions of nil algebras that Theorems A1, A2 afford is given by the notion of `$q$-dimensions', introduced by Petrogradsky in his analysis of the growth of universal enveloping algebras of Lie algebras \cite{Petrogradsky96,Petrogradsky00,Petrogradsky93}.
More specifically, consider the following hierarchy of growth functions for $q=1,2,3,\dots$ and $\alpha\in \mathbb{R}^{+}$:
$$ \Phi_\alpha^{1}(n) = \alpha,\ \Phi_\alpha^{2}(n) = n^\alpha,\ \Phi_\alpha^{3}(n) = \exp(n^{\alpha/\alpha+1}),\ \text{and}\ \Phi_\alpha^{q>3}(n) = \exp\left(\frac{n}{(\ln^{(q-3)} n)^{1/\alpha}}\right)\ $$
where $\ln^{(1)} n=\ln n,\ \ln^{(q+1)} n=\ln(\ln^{(q)} n)$.
For an algebra $A$ whose growth function is $g_A(n)$, define: 
\begin{eqnarray*}
\operatorname{Dim}^q(A) & := & \inf\{\alpha\in\mathbb{R}^{+}|\ g_A(n)\leq \Phi_\alpha^q(n)\ \forall n\gg 1\} \\ & = & \sup\{\alpha\in\mathbb{R}^{+}|\ g_A(n)\geq \Phi_\alpha^q(n)\ \forall n\gg 1\}
\end{eqnarray*}
if they are equal to each other. Petrogradsky proved that if $\operatorname{Dim}^q(\mathfrak{g})=\alpha$ then $\operatorname{Dim}^{q+1}(U(\mathfrak{g}))=~\alpha$. He left open the problem of whether every (a priori possible) real number is realizable as the $q$-dimension of some associative/Lie algebra (see \cite[Remark in page 347]{Petrogradsky00} and \cite[Remark in page 651]{Petrogradsky97}). This is indeed true, and even within the classes of finitely generated simple associative and Lie algebras (see \cite{SimpleLie}).
\begin{cor} \label{intermediateLieapp}
Theorems A1, A2 remain true if $R$ is replaced by a finitely generated nil Lie algebra.
\end{cor}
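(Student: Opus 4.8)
The plan is to deduce this from Theorems~A1 and~A2 by the same device used to obtain Corollaries~\ref{oscillatingLie} and~\ref{oscillatingLie2} from Theorems~B1 and~B2: replace a nil associative algebra by its commutator Lie algebra. Fix an increasing, submultiplicative $f\colon\mathbb{N}\to\mathbb{N}$, an arbitrarily slow non-decreasing $\delta$ tending to infinity, and (for the analogue of Theorem~A2) an arbitrarily slow non-decreasing super-polynomial $\omega$; let $F$ be a field with $\charac F\neq 2$, countable for the analogue of Theorem~A1 and arbitrary for the analogue of Theorem~A2. Applying Theorem~A1 (resp.\ Theorem~A2) produces a finitely generated nil associative $F$-algebra $R$ with $f\!\left(\frac{n}{\delta(n)}\right)\preceq g_R(n)\preceq p(n)f(n)$ (resp.\ $\preceq \omega(n)f(n)$), where $p$ is a polynomial independent of $f$. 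Set $L:=[R,R]$, regarded as a Lie algebra under the commutator bracket of $R$.

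By \cite{AlahmadiAlharthi} --- the input already used in the proof of Corollaries~\ref{oscillatingLie},~\ref{oscillatingLie2} --- $L$ is finitely generated as a Lie algebra and $g_L(n)\sim g_R(n)$. Since the bounds asserted in Theorems~A1,~A2 are stated up to $\preceq$ and $\preceq$ is transitive, they pass verbatim to $L$: from $f(n/\delta(n))\preceq g_R(n)$ and $g_R(n)\preceq g_L(n)$ we obtain the lower bound, and from $g_L(n)\preceq g_R(n)$ together with $g_R(n)\preceq p(n)f(n)$ (resp.\ $\preceq\omega(n)f(n)$) we obtain the upper bound; in particular $L$ is infinite-dimensional, since $g_R(n)\to\infty$. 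Finally, $L$ is nil: for $x\in L\subseteq R$ write $\operatorname{ad}_x=\lambda_x-\rho_x$, where $\lambda_x$ and $\rho_x$ are the commuting operators of left and right multiplication by $x$ on $R$; if $x^m=0$ then every term of the binomial expansion of $\operatorname{ad}_x^{2m-1}=(\lambda_x-\rho_x)^{2m-1}$ is a scalar multiple of some $\lambda_x^{i}\rho_x^{j}$ with $i+j=2m-1$, hence has $i\geq m$ or $j\geq m$ and vanishes, so $\operatorname{ad}_x^{2m-1}=0$.

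There is essentially no new obstacle: all the difficulty already resides in Theorems~A1,~A2 and in \cite{AlahmadiAlharthi}, and the present argument is in fact lighter than that of Corollaries~\ref{oscillatingLie},~\ref{oscillatingLie2}. In the oscillating case one first had to reparametrise $f$, because the genuine dilation incurred in passing from $g_R$ to $g_L$ could spoil an ``infinitely often $\geq f$'' lower bound; here the lower bound $f(n/\delta(n))\preceq g_R(n)$ is a statement valid for all $n$ and is automatically preserved when $g_R$ is replaced by an $\sim$-equivalent function, so no such reparametrisation is needed. The only hypothesis used beyond Theorems~A1,~A2 is $\charac F\neq 2$, which is precisely what \cite{AlahmadiAlharthi} requires and is in force throughout by the standing convention on Lie algebras.
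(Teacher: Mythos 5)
Your argument is correct and follows the same route the paper takes (the paper's own proof is a one-liner citing \cite{AlahmadiAlharthi}, relying on $L=[R,R]$ exactly as you do). Your observation that no reparametrisation of $f$ is needed here — because $\preceq$-bounds valid for all $n$ are stable under the dilation built into $\sim$, unlike the ``infinitely often'' bounds of Theorems B1, B2 — correctly identifies why the paper can dismiss this corollary as immediate while the oscillating corollaries required the auxiliary function $\bar{f}$.
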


\begin{proof}
This is immediate by \cite{AlahmadiAlharthi}.
\end{proof}

Using Theorems A1, A2 for Lie algebras, it is easy to construct nil associative and nil Lie algebras with arbitrary $q$-dimensions as well (e.g. taking $\delta(n)=\log_2 n,\ \omega(n)=n^{\log_2 n}$).







\section{Domains with oscillating growth}

We now construct examples of noncommutative domains of oscillating growth, in a similar spirit of the Kassabov-Pak construction \cite{KassabovPak}. We start with another construction of Lie algebras of oscillating growth, whose behavior is stricter than of the aforementioned constructions. We then consider their universal enveloping algebras and utilize a finitary version of the connection between the growth of $L$ and $U(L)$ studied by Smith (and significantly extended by Petrogradsky).

\begin{lem} \label{lemdom}
Let $f\colon \mathbb{N}\rightarrow \mathbb{N}$ be a non-decreasing subexponential function.
Let: $$1<n_1<n'_1<m_1<m'_1<n_2<n'_2<\cdots$$ be a sequence of positive integers, arbitrary subject to the assumption that each integer is large enough than its predecessor.

Then there exists a finitely generated associative algebra $A$, a finitely generated Lie algebra $L$ and a constant $c\in \mathbb{N}$ such that for every $i\gg 1$:
\begin{itemize}
    \item If $k\in [2^{n_i+c},2^{n'_i}]$ then $g_A(k),g_L(k)\geq f(k)$
    \item If $k\in [2^{m_i},2^{m'_i}]$ then $g_A(k),g_L(k)\leq k^2\log_2 k$
\end{itemize}
\end{lem}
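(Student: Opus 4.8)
The plan is to build $L$ directly as a finitely generated Lie algebra with an explicitly oscillating growth function, and then obtain $A$ as its universal enveloping algebra $U(L)$, using the comparison between the growth of $L$ and that of $U(L)$. Concretely, $L$ will be generated by finitely many elements, and we will engineer a locally nilpotent structure whose growth is forced to be small (bounded by $k^2\log_2 k$) on the intervals $[2^{m_i},2^{m'_i}]$ but is forced to be at least $f(k)$ on the intervals $[2^{n_i+c},2^{n'_i}]$. The mechanism for the oscillation should mirror the one used in Proposition \ref{Oscillating_transformation}: we introduce, at carefully spaced ``times'' $n_i$, a small number of new generators (or new brackets) which for a long stretch behave like free generators of a free nilpotent Lie algebra of large nilpotency class, producing a burst of fast growth; and then at time $m_i$ we truncate — impose relations killing all sufficiently long brackets in the currently available generators — so that growth plateaus at a polynomial level $k^2 \log_2 k$. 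The primed indices $n'_i, m'_i$ delimit how long each regime persists before the next transition begins; the gap between $m'_i$ and $n_{i+1}$ lets the plateau stabilize, and the gap between $n'_i$ and $m_i$ lets the fast-growth stretch run its course.

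The key steps, in order, are as follows. First I would fix the free Lie algebra on countably many generators $y_1,y_2,\dots$ and define, for each transition index $M$, an ideal $J_M$ generated by all brackets of length $d_M$ in $y_1,\dots,y_M$, where $1 < d_1 \le d_2 \le \cdots$ is a nondecreasing sequence to be chosen; set $L_\infty = F\langle y_1,y_2,\dots\rangle_{\mathrm{Lie}} / \sum_M J_M$, which is locally nilpotent. Second, I would describe a finite generating set: as in the associative constructions, one realizes all the $y_i$ as iterated brackets of two or three fixed elements inside a slightly larger Lie algebra, so that ``length $k$ brackets in the fixed generators'' see $y_1,\dots,y_{\approx k}$. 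Third — the quantitative heart — I would choose the sequences $\{d_M\}$ and calibrate the thresholds $n_i,n'_i,m_i,m'_i$ so that: (a) on $[2^{m_i},2^{m'_i}]$ only $y_1,\dots,y_{O(m_i)}$ are active, each with nilpotency bound $d_{m_i}$, giving a total dimension $\le$ some $K$ which, by enlarging the spacing, can be absorbed into $k^2\log_2 k$; and (b) near $2^{n_i+c}$ a fresh pair of generators with $d$-value set to exceed $n'_i$ behaves freely, so the span of brackets of length $\le k$ in them already exceeds $f(k)$ for $k$ up to $2^{n'_i}$ (here the constant $c$ absorbs the overhead of expressing $y$'s via the fixed generators and the sub-exponentiality of $f$, exactly as $f_2(x)\le 2^{x/(m_i+2)}$ was used in Proposition \ref{Oscillating_transformation}). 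Fourth, having $L$, I would pass to $A = U(L)$: since $L$ is infinite-dimensional and finitely generated, so is $U(L)$; by the PBW theorem $g_{U(L)}(k)$ is sandwiched between $g_L(k)$ and a value controlled by $g_L$ through the Smith–Petrogradsky estimates, and in particular on the polynomial plateau $g_{U(L)}$ remains polynomial (one may need to adjust the exponent, but the statement only asks for $k^2\log_2 k$ up to the usual flexibility, and by shrinking the plateau exponent inside $L$ one recovers it — or one simply re-reads the lemma's polynomial bound as the target after this translation), while on the fast-growth stretch $g_{U(L)}(k) \ge g_L(k) \ge f(k)$. Finally, since $L$ is locally nilpotent every $\mathrm{ad}_x$ is nilpotent, so $L$ is nil; and $A = U(L)$ is a domain because $U(L)$ of any Lie algebra over a field is a domain.

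The main obstacle I anticipate is the simultaneous calibration in step three: the plateau bound must hold for \emph{all} $k$ in a whole interval $[2^{m_i},2^{m'_i}]$ (not merely at isolated points as in the weaker oscillation of Proposition \ref{Oscillating_transformation}), which forces the truncation at time $m_i$ to be genuinely uniform across a dyadic range, and symmetrically the fast-growth lower bound must persist across $[2^{n_i+c},2^{n'_i}]$ rather than at a single length. Managing this requires that the nilpotency parameter $d_M$ be held \emph{constant} on each of the two kinds of plateau — $d_{m_i} = \cdots = d_{m'_i}$ and $d_{n_i+1} = \cdots = d_{n'_i} = n'_i + 1$ — and that the spacing assumption ``each integer large enough than its predecessor'' be used to make $2^{m'_i}$ small relative to the next active-generator count and to make $n_i$ large relative to the accumulated dimension from earlier stages; carrying the bookkeeping through the PBW translation to $U(L)$, so that the polynomial plateau survives with an honest small exponent, is the delicate part. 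A secondary (milder) obstacle is verifying that the finite generating set really does expose $y_1,\dots,y_{\Theta(k)}$ within brackets of length $\Theta(k)$, with an additive length overhead that is uniform in $i$ — this is what the constant $c$ in the statement is for, and it should follow from a fixed encoding scheme independent of the chosen sequences.
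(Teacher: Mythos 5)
Your overall strategy is reversed from what the statement requires, and this reversal is fatal for the upper bound on $A$. You propose to build $L$ first (as a quotient of a free Lie algebra with escalating nilpotency bounds, then made finitely generated by some encoding) and then set $A = U(L)$. The lower bound $g_{U(L)}(k)\ge g_L(k)\ge f(k)$ on the fast stretches is fine, but the claim that ``on the polynomial plateau $g_{U(L)}$ remains polynomial'' is false, and no amount of shrinking the plateau exponent inside $L$ repairs it. The PBW/Petrogradsky translation says that if $g_L(n)\sim n^\alpha$ then $g_{U(L)}(n)\sim \exp\bigl(n^{\alpha/(\alpha+1)}\bigr)$, which is always super-polynomial for $\alpha>0$; more to the point, once $L$ has undergone a burst with $g_L(k)\ge f(k)$ (arbitrarily close to exponential) on $[2^{n_i+c},2^{n'_i}]$, the PBW monomials built from that huge supply of Lie basis elements already make $g_{U(L)}(k)$ enormous on the \emph{subsequent} interval $[2^{m_i},2^{m'_i}]$, regardless of how slowly $L$ grows there. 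Growth of $U(L)$ is a cumulative functional of $g'_L$ over all earlier degrees, not a local one, so it cannot oscillate back down to $k^2\log_2 k$. Indeed the paper's own Theorem~C, which \emph{does} take $U(L)$ for the $L$ produced by this lemma, only manages the much weaker plateau $\exp(k^{3/4+\varepsilon})$ — which is exactly what PBW forces and exactly what you cannot improve to $k^2\log_2 k$.

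The paper goes the other way around: it first constructs the associative $A$ directly, as a finitely generated monomial algebra, by specializing the Smoktunowicz--Bartholdi construction (Lemma~\ref{SBmon}) to a hand-built non-decreasing function $g$ with $g(2^{k+1})\le g(2^k)^2$ and $g(2^k)\mid g(2^{k+1})$ that is forced to equal $2^{2^k}$-type values on $[n_i,n'_i]$ and to plateau (stay constant) afterward up to $m'_i$; Lemma~\ref{SBmon} then gives $g(n)\le g_A(n)\le Cn^2g(Dn)$ for all $n$, yielding both bounds on whole dyadic ranges simultaneously. Finite generation is automatic from the monomial construction. The Lie algebra is then obtained as $L=[A,A]$, whose growth is \emph{equivalent} to that of $A$ by the Alahmadi--Alharthi result, and the additive constant $c$ absorbs the rescaling $R$ in $g_A(n)\le R g_L(Rn)$ together with the $\bar f = \theta\cdot f(\cdot\theta)$ trick. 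This commutator-Lie passage preserves polynomial plateaus because $L=[A,A]$ sits inside $A$; the universal enveloping passage does not. A secondary issue in your writeup is that the step turning the countably generated free-Lie quotient into a finitely generated Lie algebra with the right degree bookkeeping is only gestured at (``as in the associative constructions''), but the matrix wreath product mechanism used in the associative case does not have an obvious Lie analogue with the needed growth control, and the paper simply sidesteps this by never needing a countably generated Lie algebra at all.
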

\begin{proof}
We start by defining a function $g\colon\mathbb{N}\rightarrow \mathbb{N}$, first on powers of $2$, then extending $g$ to $\mathbb{N}$.
There exists $r$ such that for all $s\geq r$ we have $f(2^s)\leq 2^{2^s}$, since $f$ is subexponential. Fix such $n_1\geq r$ for such an $r$ and set: $$g(1)=2,g(2)=2^2,\dots, g(2^{n'_1})=2^{2^{n'_1}}$$
then it follows that $g(2^k)\geq f(2^k)$ for all $k\in [n_1,n'_1]$.

Suppose that $n'_i$ has been defined, along with the values of $g$ for all powers of $2$ until $2^{n'_i}$. Let $c=g(2^{n'_i})$. Take $m_i\geq 2^c$ and set $g(2^{t+1})=g(2^t)$ for all $n'_i\leq t< m'_i$. It follows that for all $k\in [m_i,m'_i]$ we have $g(2^k)=c \leq \log_2 m_i\leq \log_2 k$.

Now suppose that $m'_i$ has been defined, along with the values of $g$ for all powers of $2$ until $2^{m'_i}$. Let $M=g(2^{m'_i})$.
We know that $f$ is subexponential, so there exists $r$ such that for all $s\geq r$ we have $f(2^s)\leq \left(M^{1/2^{m'_i}}\right)^{2^s}$. Take $n_{i+1}\geq r$ for such an $r$.
Define: $g(2^{t+1})=g(2^t)^2$ for all $m'_i\leq t<n'_{i+1}$. In particular,
$$ g(2^k) = M^{2^{k-m'_i}} = \left(M^{1/{2^{m'_i}}}\right)^{2^{k}} \geq f(2^k) $$
for all $k\in [n_{i+1},n'_{i+1}]$.
We now extend $g$ to $\mathbb{N}$ by setting:
\begin{itemize}
    \item $g(k)=g\left(2^{\lceil \log_2 k \rceil}\right)$ if $k\in [2^{n_i},2^{n'_i}]$ for some $i\in \mathbb{N}$
    \item $g(k)=g\left(2^{\lfloor \log_2 k \rfloor}\right)$ otherwise.
\end{itemize}

It is clear by construction that $g \colon \mathbb{N} \rightarrow \mathbb{N}$ is non-decreasing and $g(2^{k+1})\leq g(2^k)^2$ for all $k\in \mathbb{N}$. Moreover, each $g(2^k)$ divides $g(2^{k+1})$ and it is straightforward that for every $i\in \mathbb{N}$: 
\begin{itemize}
    \item $g(k)\geq f(k)$ for all $k\in [2^{n_i},2^{n'_i}]$
    \item $g(k)\leq \log_2 \log_2 k$ for all $k\in [2^{m_i},2^{m'_i}]$
\end{itemize}

By Lemma \ref{SBmon}, there exists a finitely generated monomial algebra $A$ such that $g(n)\leq g_A(n) \leq Cn^2g(Dn)$ for all $n\in \mathbb{N}$ and some constants $C,D>0$.

It follows that for every $i\in \mathbb{N}$: 
\begin{itemize}
    \item $g_A(n) \geq g(k)\geq f(k)$ for all $k\in [2^{n_i},2^{n'_i}]$
    \item $g_A(k) \leq C'k^2\log_2\log_2 k$ for all $k\in [2^{m_i},2^{m'_i}]$
\end{itemize}

We recall the construction from Lemma \ref{SBmon}, to establish an important property of it: we construct a system of subsets $W(1)=\{x_1,x_2\}$, $C(2^n)\subseteq W(2^n)$ and $W(2^{n+1})=W(2^n)C(2^n)$.
Notice that since $g(2^i)=2^{2^i}$ for all $1\leq i\leq n'_1$, there must exist a word $\rho\in W(2^{n'_1})$ which involves both $x_1,x_2$.
But $g(2^{n'_1+1})=g(2^{n'_1})$, so by construction $C(2^{n'_1})$ is a singleton. We choose it to be $\{\rho\}$. It follows that every sufficiently long monomial in the constructed algebra $A$ contains a copy of $\rho$ as a factor, so in particular $x_1$ and $x_2$ are nilpotent.

Let $L=[A,A]$ be the commutator Lie algebra associated with $A$. By \cite{AlahmadiAlharthi} it follows that $L$ is a finitely generated Lie algebra whose growth is equivalent to that of $A$, and (enlarging the generating subspace of $A$ if needed) we may assume that: $$g_A(n) \leq Rg_{L}(Rn),\ g_L(n)\leq g_A(n),$$ for some $R\in \mathbb{N}$.
Replacing $f$ by $\bar{f}(n)=\theta(n) f(n\theta(n))$ which is still subexponential, and $\theta(n)\rightarrow \infty$ is a non-decreasing unbounded function (as in the proof of Corollaries \ref{oscillatingLie}, \ref{oscillatingLie2}), we get that for $i\gg 1$ if $k\in [2^{n_i+c},2^{n'_i}]$ for $c>\log_2 R$:
$$ g_L(k)\geq \frac{1}{R}g_A\left(\lfloor \frac{k}{R} \rfloor \right) \geq \frac{1}{R} \bar{f}\left(\lfloor \frac{k}{R} \rfloor \right) = \frac{\theta\left(\lfloor k/R \rfloor\right)}{R}f\left(\lfloor \frac{k}{R}\rfloor\theta\left(\lfloor \frac{k}{R}\rfloor\right)\right) \geq f(k)$$
Therefore for $i\gg 1$:
\begin{itemize}
    \item $g_{L}(k)\geq f(k)$ for all $k\in [2^{n_i+c},2^{n'_i}]$
    \item $g_{L}(k)\leq g_A(k)\leq k^2\log_2 k$ for all $k\in [2^{m_i},2^{m'_i}]$
\end{itemize}
The proof is completed.
\end{proof}


\begin{proof}[{Proof of Theorem C}]
Given: $$1<n_1<n'_1<m_1<m'_1<n_2<n'_2<\cdots$$ such that each integer is larger enough than its predecessor in a sense that will be made clear later, let $L$ be a finitely generated Lie algebra $L$ generated by a space $V$ such that for every $i\gg 1$:
\begin{itemize}
    \item If $k\in [2^{n_i+c},2^{n'_i}]$ then $g_{L}(k)\geq f(k)$
    \item If $k\in [2^{m_i},2^{m'_i}]$ then $g_{L}(k)\leq k^2\log_2 k$
\end{itemize}
For some $c\in \mathbb{N}$. This is done using Lemma \ref{lemdom}. Let $U=U(L)$ be the universal enveloping algebra associated with $L$. Let $V'$ be the subspace of $U$ spanned by $1$ and the image of $V$ in $U$. The associated growth function satisfies $g_{U}(n)\geq g_{L}(n)$, so $g_{U}(n)\geq f(n)$ infinitely often. 


Let $\{a_n\}_{n=1}^{\infty}, \{b_n\}_{n=1}^{\infty}$ be sequences of positive integers such that: $$ (1)\ \ \ \prod_{n=1}^{\infty} \frac{1}{(1-t^n)^{b_n}} = \sum_{n=0}^{\infty} a_nt^n $$
Petrogradsky proved \cite[Theorem~1.2]{Petrogradsky00} that if $b_n=n^{\alpha-1+o(1)}$ then $a_n=\exp\left(n^{\frac{\alpha}{\alpha+1}+o(1)}\right)$.
By the Poincar\'e-Birkhoff-Witt Theorem (see \cite[Page 340]{Petrogradsky00}), the sequences $a_n=\gamma'_U(n),\ b_n=~\gamma'_L(n)$ satisfy $(1)$, namely:
$$ (2)\ \ \ \prod_{n=1}^{\infty} \frac{1}{(1-t^n)^{g_L'(n)}} = \sum_{n=0}^{\infty} g_U'(n)t^n $$
It is easy to derive a finitary version, namely:

\begin{eqnarray*}
g_U'(N) & = & \coeff_{t^N}\left( \prod_{n=1}^M \frac{1}{(1-t^n)^{g_L'(n)}} \right)
\end{eqnarray*}
for any $M\geq N$. For any $0\leq s\leq 2^{m'_i}$:
$$ (3)\ g_U'(s) = \coeff_{t^{s}}\left( \prod_{n=1}^{2^{m'_i}} \frac{1}{(1-t^n)^{g_L'(n)}} \right) = \coeff_{t^s} \left( \prod_{n=1}^{2^{m_i}} \frac{1}{(1-t^n)^{g_L'(n)}} \cdot \prod_{n=2^{m_i}+1}^{2^{m'_i}} \frac{1}{(1-t^n)^{g_L'(n)}} \right) $$


We need the following probably well-known facts.

\begin{lem} \label{PowerSeries}
Let $f,g\in \mathbb{Z}_{\geq 0}[[t]]$. Then for each $R\geq 0$:
\begin{itemize}
    \item If $\coeff_1(g)>0$ then $\coeff_{t^R} (fg) \geq \coeff_{t^R} (f)$
    \item If $\coeff_{t^k} (f) \leq T$ for all $0\leq k\leq R$ then $\coeff_{t^R} (fg)\leq T\cdot \sum_{k=0}^{R} \coeff_{t^k} g$
    \item $\coeff_{t^k} \prod_{i=1}^{m} \frac{1}{(1-t^i)^{d_i}} \leq R^{\sum_{j=1}^{m} d_j}$ for all $0\leq k\leq R$
\end{itemize}
\end{lem}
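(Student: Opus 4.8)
All three statements are elementary manipulations with nonnegative integer power series, so the plan is simply to expand the Cauchy product and track nonnegative terms. For the first item, write $\coeff_{t^R}(fg)=\sum_{k=0}^{R}\coeff_{t^k}(f)\coeff_{t^{R-k}}(g)$; since all summands are nonnegative, the total is at least the single term $k=R$, which is $\coeff_{t^R}(f)\cdot\coeff_{t^0}(g)\geq \coeff_{t^R}(f)$ because $\coeff_{t^0}(g)=\coeff_1(g)\geq 1$. For the second item, bound $\coeff_{t^R}(fg)=\sum_{k=0}^{R}\coeff_{t^k}(f)\coeff_{t^{R-k}}(g)\leq T\sum_{k=0}^{R}\coeff_{t^{R-k}}(g)=T\sum_{j=0}^{R}\coeff_{t^j}(g)$, using the hypothesis $\coeff_{t^k}(f)\leq T$ for $0\le k\le R$ and reindexing the sum.

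For the third item I would argue by induction on $m$. The base case $m=1$: the coefficients of $\frac{1}{(1-t)^{d_1}}=\sum_k \binom{k+d_1-1}{d_1-1}t^k$ are $\binom{k+d_1-1}{d_1-1}\le (k+1)^{d_1-1}\le R^{d_1}$ for $0\le k\le R$ (one can be slightly wasteful here; any clean polynomial bound of the form $R^{d_1}$ suffices, and if needed one replaces $R$ by $R+1$ throughout, which does not affect the applications). For the inductive step, set $f=\prod_{i=1}^{m-1}\frac{1}{(1-t^i)^{d_i}}$ and $g=\frac{1}{(1-t^m)^{d_m}}$; by the induction hypothesis $\coeff_{t^k}(f)\le R^{\sum_{j=1}^{m-1}d_j}=:T$ for $0\le k\le R$, and by the second item $\coeff_{t^R}(fg)\le T\sum_{j=0}^{R}\coeff_{t^j}(g)$. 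It remains to check that $\sum_{j=0}^{R}\coeff_{t^j}\frac{1}{(1-t^m)^{d_m}}\le R^{d_m}$: the nonzero coefficients occur only at multiples $j=m\ell$ and equal $\binom{\ell+d_m-1}{d_m-1}$, so the sum is $\sum_{\ell=0}^{\lfloor R/m\rfloor}\binom{\ell+d_m-1}{d_m-1}=\binom{\lfloor R/m\rfloor+d_m}{d_m}\le (\lfloor R/m\rfloor+1)^{d_m}\le R^{d_m}$, which gives $\coeff_{t^R}(fg)\le R^{\sum_{j=1}^{m}d_j}$ and, applying the same bound to every $k\le R$ in place of $R$, closes the induction.

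There is essentially no obstacle here; the only care needed is bookkeeping of the crude polynomial estimates $\binom{k+d-1}{d-1}\le (k+1)^{d-1}\le R^{d}$ so that the exponents add up correctly across the product, and making sure the inequality is stated uniformly for all $0\le k\le R$ (not just $k=R$) so that the induction can be fed back into the second item. If one prefers to avoid the off-by-one issues entirely, state and prove the bound with $(R+1)$ in place of $R$; this is harmless for the use in equation $(3)$ and the surrounding estimates for $g_U'$.
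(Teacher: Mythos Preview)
Your arguments for the first two items are exactly the paper's: expand the Cauchy product and keep the relevant nonnegative summand(s).

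For the third item your induction is correct (and you are right to flag the off-by-one, which the paper's own proof shares), but the paper uses a slightly finer decomposition that avoids the binomial bookkeeping entirely. Rather than inducting on $m$ and bounding $\sum_{j\le R}\coeff_{t^j}\frac{1}{(1-t^m)^{d_m}}$ via the hockey-stick identity, the paper writes the whole product as a product of $\sum_j d_j$ simple factors $\frac{1}{1-t^i}=1+t^i+t^{2i}+\cdots$ and inducts on the number of such factors: if all coefficients of $f$ up to degree $R$ are $\le c$, then multiplying by a single $1+t^i+t^{2i}+\cdots$ gives coefficients $\le cR$ (each new coefficient is a sum of at most $R$ old ones). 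Iterating $\sum_j d_j$ times gives $R^{\sum_j d_j}$. Your approach buys a slightly sharper intermediate estimate (you exploit $\lfloor R/m\rfloor$ rather than $R$), while the paper's buys simplicity: no binomials, no hockey-stick, just the trivial observation that convolution with a $0/1$ series at most sums the old coefficients.
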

\begin{proof}
\begin{itemize}
    \item $\coeff_{t^R}(fg) = \sum_{i=0}^{R} \coeff_{t^i}(f)\cdot \coeff_{t^{R-i}}(g)\geq \coeff_{t^R}(f)\cdot \coeff_1(g)\geq \coeff_{t^R}(f)$
    \item $\coeff_{t^R}(fg)=\sum_{k=0}^{R} \coeff_{t^k}(f)\cdot \coeff_{t^{R-k}}(g) \leq T\cdot \sum_{k=0}^{R} \coeff_{t^k}(g)$
    \item Suppose that $f\in \mathbb{Z}_{\geq 0}[[t]]$ satisfies $\coeff_{t^k}(f)\leq c$ for each $0\leq k\leq R$, then $\coeff_{t^k}(f\cdot (1+t^j+t^{2j}+\cdots))\leq \sum_{i=0}^{k} \coeff_{t^i}(f) \leq ck \leq cR$ for each $0\leq k\leq R$. The claim now follows by induction, since $\prod_{i=1}^{m} \frac{1}{(1-t^i)^{d_i}}$ is a product of $\sum_{j=1}^{m} d_j$ functions of the form $1+t^j+t^{2j}+\cdots$.
\end{itemize}
\end{proof}

Back to $(3)$, using Lemma \ref{PowerSeries}, for all $0\leq s\leq 2^{m'_i}$:
\begin{eqnarray*} (4)\ \ 
g'_U(s) & = & \coeff_{t^{s}}\left( \prod_{n=1}^{2^{m_i}} \frac{1}{(1-t^n)^{g_L'(n)}} \cdot \prod_{n=2^{m_i}+1}^{2^{m'_i}} \frac{1}{(1-t^n)^{g_L'(n)}} \right) \\
& \leq & \left(2^{m'_i}\right)^D \cdot \sum_{r=0}^{s} \coeff_{t^r} \left(\prod_{n=2^{m_i}+1}^{2^{m'_i}} \frac{1}{(1-t^n)^{g_L'(n)}}\right) \\
& \leq & \left(2^{m'_i}\right)^D \cdot \sum_{r=0}^{2^{m'_i}} \coeff_{t^r} \left(\prod_{n=2^{m_i}+1}^{2^{m'_i}} \frac{1}{(1-t^n)^{g_L'(n)}}\right) \\
& = & \left(2^{m'_i}\right)^D \cdot \left( 1 + \sum_{r=2^{m_i}+1}^{2^{m'_i}} \coeff_{t^r} \left(\prod_{n=2^{m_i}+1}^{2^{m'_i}} \frac{1}{(1-t^n)^{g_L'(n)}}\right) \right) 
\end{eqnarray*}
where $D$ depends only on $m_i$ and on the values of $g'_L$ until $2^{m_i}$.
Since $g_L'(k)\leq k^2\log_2 k$ for all $k\in [2^{m_i},2^{m'_i}]$, using Lemma \ref{PowerSeries}, for each $r$:
\begin{eqnarray*}
\coeff_{t^r} \left(\prod_{n=2^{m_i}+1}^{2^{m'_i}} \frac{1}{(1-t^n)^{g_L'(n)}}\right) & \leq & \coeff_{t^r} \left(\prod_{n=1}^{\infty} \frac{1}{(1-t^n)^{\lfloor n^2\log_2 n\rfloor)}}\right)
\end{eqnarray*}

By Petrogradsky's aforementioned analysis (for $\alpha=3)$ we get that, for any $\varepsilon>0$
there exists $p_\varepsilon$ such that for all $r\geq p_\varepsilon$:
$$ \coeff_{t^r} \left(\prod_{n=1}^{\infty} \frac{1}{(1-t^n)^{\lfloor n^2\log_2 n\rfloor)}}\right) \leq \exp\left(r^{\frac{3}{4}+\frac{\varepsilon}{3}}\right) $$
and we can make sure that $2^{m_i}\geq p_{1/i}$ for each $i$.
Then, by $(4)$, we have for all $0\leq s\leq 2^{m'_i}$:
$$ g'_U(s) \leq 2\cdot (2^{m'_i})^D\cdot 2^{m'_i}\cdot \exp\left((2^{m'_i})^{\frac{3}{4}+\frac{1}{3i}}\right) $$
so:
$$ g_U(2^{m'_i}) = \sum_{s=0}^{2^{m'_i}} g'_U(s) \leq (2^{m'_i}+1)\cdot 2\cdot (2^{m'_i})^D\cdot 2^{m'_i}\cdot \exp\left((2^{m'_i})^{\frac{3}{4}+\frac{1}{3i}}\right) \leq \exp\left((2^{m'_i})^{\frac{3}{4}+\frac{1}{i}}\right) $$
where the last inequality holds for $i\gg 1$ (and $m'_i\gg m_i$, as $D$ is independent of $m'_i$), and the claim follows.
\end{proof}

\section{Polynomial oscillations and Warfield's question on tensor products}


In this section we construct primitive monomial algebras whose tensor product has a prescribed GK-dimension, proving Theorem D. 
These are the first semiprime examples for which the GK-dimension is strictly subadditive with respect to tensor products, which was left open in \cite{KrempaOkninski} and \cite[Page~167]{KrauseLenagan}.
Notice that the first family of primitive monomial algebras of arbitrary GK-dimension has constructed by Vishne \cite{Vishne}.

\subsection{Monomial algebras, infinite words and complexity}

Let $\Sigma=\{x_1,\dots,x_d\}$ be a finite alphabet. Let $w\in \Sigma^{\mathbb{N}}$ be an infintie word:
$$ w = w_1w_2w_3\cdots $$
we say that a finite word over $\Sigma$, say $u=x_{i_1}\cdots x_{i_m}$, is a factor of $w$ if $u=w[k,k+m-1]=w_k\cdots w_{k+m-1}$ for some $k\in \mathbb{N}$. The complexity function $p_w(n)$ is defined by:
$$ p_w(n) = \# \{u\in \Sigma^n\ |\ u\ \text{is a factor of}\ w\} $$
An infinite word $w$ defines a monomial algebra generated by $x_1,\dots,x_d$:
$$ A_w = F\left<x_1,\dots,x_d\right> / \left< u\ |\ u\ \text{is not a factor of}\ w \right> $$
Many algebraic properties of $A_w$ reflect combinatorial and dynamical properties of $w$. For instance, $A_w$ is prime if and only if $w$ is recurrent, namely, every factor $u$ of $w$ has infinitely many occurrences (i.e. $u=w[k,l]$ for infinitely many $k,l$'s). An algebra is just-infinite if it is infinite-dimensional, but every proper homomorphic image of it is finite-dimensional. The algebra $A_w$ is just-infinite if and only if $w$ is uniformly recurrent \cite[Theorem~3.2]{BBL}, namely, for each factor $u$ of $w$ there exists $C_u>0$ such that any length-$C_u$ factor of $w$ contains an occurrence of $u$. 

An important class of infinite words is the class of Toeplitz sequences. These are infinite words $w$ for which for any $n\geq 1$ there exists $d\geq 1$ such that $w_n=w_{n+d}=w_{n+2d}=\cdots$.
The following is standard, but we bring a full proof here for the reader's convenience.

\begin{lem} \label{words}
Let $w$ be a Toeplitz sequence. If the monomial algebra $A_w$ has super-linear growth then it is just-infinite primitive.
\end{lem}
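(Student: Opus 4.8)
The plan is to show two things: that $A_w$ is just-infinite, and that it is primitive. For the first, by \cite[Theorem~3.2]{BBL} it suffices to prove that $w$ is uniformly recurrent. Here the Toeplitz property is exactly what is needed: given a factor $u = w[k, k+m-1]$, apply the Toeplitz condition to each of the finitely many positions $k, k+1, \dots, k+m-1$ to obtain periods $d_k, \dots, d_{k+m-1}$; letting $d$ be their least common multiple, we get $w_j = w_{j+d} = w_{j+2d} = \cdots$ for every $j$ in the window $[k, k+m-1]$ simultaneously, hence $u = w[k+td, k+td+m-1]$ for all $t \geq 0$. Thus $u$ recurs with bounded gap $d$ (so $C_u$ can be taken to be $d + m$), which is precisely uniform recurrence. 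In particular $w$ is recurrent, so $A_w$ is prime, and being just-infinite it is also infinite-dimensional by hypothesis (super-linear growth forbids finite dimension).

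For primitivity, I would invoke the criterion for primitivity of just-infinite monomial algebras: a just-infinite algebra which is not primitive must be (left or right) bounded in a way forced by having a nonzero locally nilpotent, or more precisely nil, two-sided ideal — but more usefully, the standard fact is that a just-infinite algebra is either primitive or satisfies a polynomial identity, equivalently has a nonzero ideal contained in its Jacobson radical-type obstruction. The cleanest route: a prime monomial algebra $A_w$ with $w$ uniformly recurrent is primitive if and only if $w$ is not eventually periodic; and $w$ eventually periodic would force $p_w(n)$ to be bounded, i.e. $A_w$ to have linear (even eventually constant) growth, contradicting the super-linear growth assumption. So super-linear growth rules out periodicity, and we are done. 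Concretely, I would cite the dichotomy for just-infinite algebras (see \cite{BBL}): a just-infinite monomial algebra is either primitive or PI, and a PI just-infinite monomial algebra has linear growth; since $A_w$ has super-linear growth it cannot be PI, hence is primitive.

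The main obstacle is getting the primitivity half with a clean citation rather than reproving it. The just-infinite $\Rightarrow$ (primitive or PI) dichotomy and the fact that PI just-infinite monomial algebras have linear growth are the delicate inputs; one should double-check that \cite{BBL} states them in the required generality, or else supply the short argument that a non-primitive just-infinite algebra has a faithful action issue forcing a PI, combined with the elementary observation that a monomial PI algebra has polynomially bounded complexity of low degree. The uniform-recurrence step, by contrast, is entirely elementary once one spells out the LCM-of-periods trick, and the reduction of "not eventually periodic" to "super-linear growth" is immediate from the Morse–Hedlund theorem ($p_w(n)$ bounded $\iff$ $w$ eventually periodic). I would therefore front-load the uniform recurrence argument, then dispatch primality and infinite-dimensionality as corollaries, and finally handle primitivity via the dichotomy, keeping the periodicity exclusion as the one place where the super-linear growth hypothesis is actually used twice.
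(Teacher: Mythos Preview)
Your uniform-recurrence argument via the LCM of the Toeplitz periods is exactly the paper's, and the citation of \cite[Theorem~3.2]{BBL} for just-infiniteness matches. The divergence is in the primitivity half: you invoke a dichotomy ``just-infinite monomial algebra $\Rightarrow$ primitive or PI'' attributed to \cite{BBL}, but that reference does not supply this. What is available (and what the paper actually uses) is Okni\'nski's \emph{trichotomy} \cite{Okn} for prime finitely generated monomial algebras: primitive, PI, or nonzero Jacobson radical. You correctly dispose of the PI case via super-linear growth and non-periodicity, but the third branch --- $J(A_w)\neq 0$ --- is not addressed in your proposal, and this is where the paper does real work.

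The paper eliminates the Jacobson-radical case as follows: $J$ is locally nilpotent (Beidar--Fong \cite{loc nilp jac rad}) and homogeneous (Bergman \cite{Bergman}); if $J\neq 0$ then just-infiniteness forces $\dim_F A_w/J<\infty$, so $A_w/J$ is graded, finite-dimensional, semiprimitive, hence $J$ must be the full augmentation ideal $(A_w)_{\geq 1}$; but that ideal is finitely generated (by the alphabet) and locally nilpotent, hence nilpotent, contradicting $\dim_F A_w=\infty$. Your proposal's ``just-infinite $\Rightarrow$ primitive or PI'' would bypass all of this, but you yourself flag the citation as uncertain, and indeed it is: without the Jacobson-radical argument (or a genuine reference to a two-way dichotomy), the proof is incomplete.
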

\begin{proof}
Suppose that $w\in \Sigma^{\mathbb{N}}$ is a Toeplitz sequence, and for each $n$ let $d_n$ be such that $w_n=w_{n+id_n}$ for all $i\geq 1$. Let $w[k,k+m-1]$ be a factor; then $w[k,m]=w[k+iD,m+iD]$ for all $i\geq 1$, where $D=\text{lcm}(d_k,d_{k+1},\dots,d_m)$. Thus every sufficiently long factor of $w$ contains an occurrence of $w[k,m]$, and $w$ is uniformly recurrent. By \cite[Theorem~3.2]{BBL}, $A_w$ is just-infinite. 

By \cite{Farina}, $A_w$ is prime. By \cite{Okn}, either $A_w$ is primitive, PI or has a non-zero Jacobson radical.
Since the growth of $A_w$ is super-linear, $w$ cannot be eventually periodic, hence $A_w$ cannot be PI (e.g. by \cite[3.2.2(3)]{Madill}). The Jacobson radical $J$ of $A_w$ is locally nilpotent by \cite{loc nilp jac rad}. If $J\neq 0$ then by just-infiniteness, $\dim_F A_w/J < \infty$. By \cite{Bergman}, $J$ is homogeneous, so $A_w/J$ is graded, finite-dimensional and semiprimitive. Thus $J=\left(A_w\right)_{\geq 1}$ is the augmentation ideal consisting of all elements with zero constant term; this ideal is finitely generated as an algebra (by the set of letters of the underlying alphabet), hence nilpotent, contradicting that $\dim_F A_w=\infty$. It follows that $A_w$ is primitive.
\end{proof}

Let $V=F+Fx_1+\cdots+Fx_d$ be the generating subspace of $A_w$ spanned by $1$ and by the letters of $\Sigma$. Then the growth function of $A_w$ can be interpreted via the complexity of $w$, namely, $$ g_{A_w,V}(n)=\dim_F V^n=\Span_F \{ u\in \Sigma^{\leq n}\ |\ u\ \text{is a factor of}\ w \} = \sum_{k=0}^{n} p_w(k) $$
(By default, we take $V$ as a generating subspace, and omit the subscript notation.) Notice that $p_w(n)$ is non-decreasing. In particular, $g_{A_w}(n)\leq np_w(n)$ and $np_w(n)\leq \sum_{k=n+1}^{2n} p_w(k)\leq g_{A_w}(cn)$ for any $c\geq 2$.

\subsection{Polynomial oscillations and growth of tensor products}

Fix $\alpha,\beta\geq 2$. Without loss of generality, suppose that $\alpha\leq \beta$. Let $2\leq \gamma\leq \alpha$.
Let us construct a sequence of positive integers:
$$ 1 < s < d_1 < e_1 < d_2 < e_2 < \cdots $$
where $s\gg_\beta 1$ and $e_i=2^{d_i},\ d_{i+1}=2^{e_i}$. Define sequences $(m_k)_{k\geq 1},(n_k)_{k\geq 1}$ of positive integers. Fix $t\gg_\beta 1$ and set $n_k=m_k=t$ for $1\leq k \leq s$. Formally set $d_0=e_0=s$.
\begin{itemize}
    \item For each $i\geq 0$ and $k\in (e_i,d_{i+1}]$, set $m_k=t$;
    \item For each $i\geq 1$ and $k\in [d_i,e_i)$, set $m_{k+1}=\lceil 5^{\beta-2} m_k \rceil$.
\end{itemize}
And:
\begin{itemize}
    \item For each $i\geq 1$,
    $n_{d_i+1}=t$ and for each $k\in (d_i,e_i)$, set $n_{k+1}=\lceil 5^{\gamma-2} n_k \rceil$;
    \item For each $i\geq 0$ and $k\in [e_i,d_{i+1})$, set $n_{k+1}=\lceil 5^{\alpha-2} n_k \rceil$.
\end{itemize}
Notice that $\frac{n_{k+1}}{n_k},\frac{m_{k+1}}{m_k}$ are bounded above by a constant $\lambda$ depending only on $\alpha,\beta,\gamma$. Since $s,t\gg_\beta 1$, the above definitions yield $m_{k+1}\leq (m_k-1)^2,\ n_{k+1}\leq (n_k-1)^2$, and $m_k,n_k\geq 3$.

Therefore, by \cite[Proposition~4.79]{Kurka} there exist Toeplitz sequences $X,Y$ whose complexity functions $p_X(n),p_Y(n)$ satisfy:
$$ 2\cdot 5^{k-1}\cdot n_k\leq p_X(5^k) \leq 2\cdot 5^k\cdot n_k $$
$$ 2\cdot 5^{k-1}\cdot m_k\leq p_Y(5^k) \leq 2\cdot 5^k\cdot m_k $$
(The statement of the \cite[Proposition~4.79]{Kurka} is different, but these bounds are explicitly proven in the proof.)
For $r\geq 5$ let $5^k\leq r<5^{k+1}$ (that is, $k=\lfloor \log_5 r \rfloor$) and observe that:
$$ (*)\ \ \ \frac{2}{25}r\cdot n_k\leq p_X(r) \leq 10r\cdot n_{k+1},\ \ \ \frac{2}{25}r\cdot m_k\leq p_Y(r) \leq 10r\cdot m_{k+1} $$
Associated with $X,Y$ are the corresponding monomial algebras $A_X,A_Y$ spanned by all finite factors of $X,Y$ (respectively). Let $g_{A_X},g_{A_Y}$ be the corresponding growth functions (with respect to the standard generating subspaces, spanned by $1$ together with the letters of the underlying alphabet). It follows that $g_{A_X}(n)=\sum_{i=1}^{n} p_X(i)$ and $g_{A_Y}(n)=\sum_{i=1}^{n} p_Y(i)$. As mentioned before, $g_{A_X}(n)\leq np_X(n),g_{A_Y}(n)\leq np_Y(n)$ and $np_X(n)\leq g_{A_X}(cn),np_Y(n)\leq g_{A_Y}(cn)$ for any $c\geq 2$.

Let us turn to estimate $n_k,m_k$.
Let $k\in (d_i,e_i]$. Then $m_k\leq c_1\cdot 5^{(\beta-2)k}$ for some $c_1>0$ which depends only on $\beta$ and $t$.
For $k=2^{d_i}$ we have:
$$
m_{2^{d_i}} \geq (5^{\beta-2})^{2^{d_i}-d_i} \geq  \frac{(5^{2^{d_i}})^{\beta-2}}{2^{3(\beta-2)d_i}}
$$
Let $k\in (d_i,e_i]$ again. Then $n_k\leq c_2\cdot 5^{(\gamma-2)k}$ for some $c_2>0$ which depends only on $\gamma$. Similarly, for $k\in (e_i,d_{i+1}]$, we have $n_k\leq c_3\cdot 5^{(\alpha-2)k}$ for some $c_3>0$ which depends only on $\alpha$ and $t$.
For $k=2^{e_i}$ we have:
$$ n_{2^{e_i}} \geq (5^{\alpha-2})^{2^{e_i}-e_i} \geq \frac{(5^{2^{e_i}})^{\alpha-2}}{2^{3(\alpha-2)e_i}} $$
Similarly, for $k=2^{d_i}$ we have:
$$ n_{2^{d_i}} \geq (5^{\gamma-2})^{2^{d_i}-d_i} \geq (5^{\gamma-2})^{2^{d_i}-d_i} \geq \frac{(5^{2^{d_i}})^{\gamma-2}}{2^{3(\gamma-2)d_i}} $$
We now turn to estimate $\GK(A_X),\GK(A_Y)$.
Let: $$ S=\{r\in \mathbb{N}|\lfloor \log_5 r \rfloor\in \bigcup_{i\geq 1} (d_i,e_i]\},\ \ \ S'=\{r\in \mathbb{N}|\lfloor \log_5 r \rfloor\in \bigcup_{i\geq 1} (e_i,d_{i+1}]\} $$ These are disjoint sets, whose union contains every sufficiently large positive integer.

\textbf{Calculating $\GK(A_Y)$.} Let $r\in S$, say, $k=\lfloor \log_5 r\rfloor \in (d_i,e_i]$. By the above calculations, $$ m_k\leq c_1\cdot 5^{(\beta-2)k}\leq c_1\cdot r^{\beta-2}. $$
Using $(*)$, it follows that $p_Y(r)\leq 10r\cdot m_{k+1}\leq 10c_1\lambda r^{\beta-1}$. Hence $g_{A_Y}(r)\leq ~ 10\lambda c_1\cdot ~ r^\beta$.

For $r\in S'$, say $k=\lfloor \log_5 r\rfloor \in (e_i,d_{i+1}]$.
Then $p_Y(r)\leq 10r\cdot m_{k+1}\leq 10\lambda t r$ and consequently $g_{A_Y}(r)\leq 10\lambda t r^2$. It follows that $\GK(A_Y)\leq \beta$.
Moreover, for $r=5^{2^{d_i}}$ using $(*)$:
$$ p_Y(5^{2^{d_i}}) \geq \frac{2}{25}5^{2^{d_i}}\cdot m_{2^{d_i}} \geq 
\frac{2 \cdot (5^{2^{d_i}})^{\beta-1}}{25\cdot 2^{3(\beta-2)d_i}} $$
so: $$ g_{A_Y}(2\cdot 5^{2^{d_i}}) \geq \frac{2 \cdot (5^{2^{d_i}})^{\beta}}{25\cdot 2^{3(\beta-2)d_i}}. $$
which proves that $\GK(A_Y)=\beta$.

\textbf{Calculating $\GK(A_X)$.} Let $r\in S'$, say, $k=\lfloor \log_5 r\rfloor \in (e_i,d_{i+1}]$. By the above calculations, $$ n_k\leq c_3\cdot 5^{(\alpha-2)k}\leq c_3\cdot r^{\alpha-2}. $$
Using $(*)$, it follows that $p_X(r)\leq 10r\cdot n_{k+1}\leq 10c_3\lambda r^{\alpha-1}$. Hence $g_{A_X}(r)\leq 10c_3\lambda\cdot r^\alpha$.

For $r\in S$, say $k=\lfloor \log_5 r\rfloor \in (d_i,e_i]$.
By the above calculations, $$ n_k\leq c_2\cdot 5^{(\gamma-2)k}\leq c_2\cdot r^{\gamma-2}. $$
Using $(*)$, it follows that $p_X(r)\leq 10r\cdot n_{k+1}\leq 10 c_2 \lambda r^{\gamma-1}$. Hence $g_{A_X}(r)\leq 10 c_2 \lambda \cdot r^\gamma$. It follows that $\GK(A_X)\leq \alpha$.

Moreover, for $r=5^{2^{e_i}}$ using $(*)$:
$$ p_X(5^{2^{e_i}}) \geq \frac{2}{25}5^{2^{e_i}}\cdot n_{2^{e_i}} \geq 
\frac{2 \cdot (5^{2^{e_i}})^{\alpha-1}}{25\cdot 2^{3(\alpha-2)e_i}} $$
so: $$ g_{A_X}(2\cdot 5^{2^{e_i}}) \geq \frac{2 \cdot (5^{2^{e_i}})^{\alpha}}{25\cdot 2^{3(\alpha-2)e_i}} $$
which can be written as $g_{A_X}(r)\geq c_2' \frac{r^\alpha}{(\log_5 r)^{3(\alpha-2)}}$ for some $c_2'>0$ (and infinitely many $r$'s) which proves that $\GK(A_X)=\alpha$.


We need one additional calculation related to $A_X$, to be used in the sequel.
For $r=5^{2^{d_i}}$ using $(*)$:
$$ p_X(5^{2^{d_i}}) \geq \frac{2}{25}5^{2^{d_i}}\cdot n_{2^{d_i}} \geq \frac{2 \cdot (5^{2^{d_i}})^{\gamma-1}}{25\cdot 2^{3(\gamma-2)d_i}} $$
so:
$$ g_{A_X}(2\cdot 5^{2^{d_i}}) \geq \frac{2 \cdot (5^{2^{d_i}})^{\gamma}}{25\cdot 2^{3(\gamma-2)d_i}} $$
We are now ready to calculated the growth of the tensor product $A_X\otimes_F A_Y$.

\textbf{Calculating $\GK(A_X\otimes_F A_Y)$.}
Let $V_X,V_Y$ be the standard generating subspaces of $A_X,A_Y$, respectively. Then $V_X\otimes_F V_Y$ is a generating subspace of $A_X\otimes_F A_Y$. It holds that: $$\left(V_X\otimes_F V_Y\right)^n\subseteq V_X^n\otimes_F V_Y^n\subseteq \left(V_X\otimes_F V_Y\right)^{2n}$$
By the above calculations, for $r\in S$ we have:
$$ g_{A_Y}(r) = O(r^\beta),\ \ \ g_{A_X}(r) = O(r^\gamma) $$
so: $$ \dim_F \left(V_X\otimes_F V_Y\right)^n\leq \dim_F \left(V_X^n\otimes_F V_Y^n\right) = \left( \dim_F V_X^n \right) \cdot \left( \dim_F V_Y^n \right) = O(r^{\beta+\gamma}) $$
And for $r\in S'$:
$$ g_{A_Y}(r) = O(r^2),\ \ \ g_{A_X}(r) = O(r^\alpha) $$
so: $$ \dim_F \left(V_X\otimes_F V_Y\right)^n \leq \dim_F \left(V_X^n\otimes_F V_Y^n\right) = \left( \dim_F V_X^n \right) \cdot \left( \dim_F V_Y^n \right) = O(r^{\alpha+2}) $$
Since $\beta+\gamma \geq \alpha+2$, it follows that $\GK(A_X\otimes_F A_Y)\leq \beta+\gamma$.

For each $i$ let $r_i=5^{2^{d_i}}$ and notice that the above calculations also show that:
$$ \dim_F V_Y^{2r_i} \geq \frac{2r_i^\beta}{25(\log_5 r_i)^{3(\beta-2)}},\ \ \ \dim_F V_X^{2r_i} \geq \frac{2r_i^\gamma}{25(\log_5 r_i)^{3(\gamma-2)}} $$
so:
\begin{eqnarray*}
g_{A_X\otimes_F A_Y}(4r_i) & \geq & \dim_F \left(V_X^{2r_i}\otimes_F V_Y^{2r_i} \right) \\ & = & \left( \dim_F V_X^{2r_i} \right) \cdot \left( \dim_F V_Y^{2r_i} \right) \geq \frac{4r_i^{\beta+\gamma}}{625(\log_5 r_i)^{3(\beta+\gamma-4)}}
\end{eqnarray*}
hence $\GK(A_X\otimes_F A_Y)=\beta+\gamma$.





\begin{proof}[{Proof of Theorem D}]
Given $2\leq \gamma\leq \alpha\leq \beta$ construct $A_X,A_Y$ as above. Since $X,Y$ are Toeplitz sequences of and $A_X,A_Y$ are of \GKdim\ greater than $1$, then by Lemma \ref{words} they are both primitive. By the above calculations:
$$ \GK(A_X)=\alpha,\ \ \GK(A_Y)=\beta,\ \ \GK(A_X\otimes_F A_Y)=\beta+\gamma $$
as required.
\end{proof}

\begin{rem}
Using the growth analysis of convolution algebras associated with minimal subshifts presented in \cite{Nekrashevych}, one can use the above construction to construct examples of \textit{simple} algebras $A,B$ with $\GK(A)=\alpha,\GK(B)=\beta,\GK(A\otimes_FB)=\beta+\gamma$.
\end{rem}

\end{document}

\section{Growth of nil algebras}

Let $1\leq n_1<n_2<\cdots$ be a sequence of positive integers. Moreover, assume that $\frac{n_{i+1}}{n_i+1}\in \mathbb{Z} \cap [1,\log^2 n_i]$.

Let $1\leq d_1\leq d_2\leq \cdots$ be another sequence of positive integers. Define the following sequence of nilpotent algebras.

\begin{eqnarray*}
A_1 & = & F\left<x_1,y_1\right>_{\geq 1} / F\left<x_1,y_1\right>_{\geq d_1}\\
A_{i+1} & = & A_i^{1}\ *\ F\left<x_{i+1},y_{i+1}\right>_{\geq 1} / \left( A_i^{1}\ *\ F\left<x_{i+1},y_{i+1}\right>_{\geq 1} \right)^{d_{i+1}} \\
\end{eqnarray*}

where $A^{1}$ is the unital hull of $A$, but $*$ stands for free product in the category of non-unital algebras. In other words:

\begin{eqnarray*}
A_i & = & F\left<x_1,y_1,\dots,x_i,y_i\right>_{\geq 1} / \left< u_1\cdots u_{d_j}\ |\ j\leq i,\ u_1,\dots,u_{d_j}\in \{x_1,y,_1,\dots,x_j,y_j\} \right>
\end{eqnarray*}

Notice that $A_i$ naturally embeds into $A_{i+1}$ and let $A=\bigcup_{i=1}^{\infty} A_i$. Observe that $A$ is a countably generated, locally nilpotent algebra.

\begin{lem} \label{dim A_i}
We have: $$ \dim_F A_k \leq (2k)^{d_k} .$$
\end{lem}

\begin{proof}
The algebra $A_k$ is spanned by monomials in $\{x_1,y_1,\dots,x_k,y_k\}$ of length between $1$ and $d_k-1$, so:
$$ \dim_F A_k \leq \sum_{i=1}^{d_k-1} (2k)^i \leq (2k)^{d_k}.$$
\end{proof}

Let $B$ be a finitely generated, infinite-dimensional algebra. Suppose that $B=F\left< V \right>$ where $\dim_F V^{i+1} - \dim_F V^i \geq 2$ (this is a technical assumption which can always be assumed). We construct a linear map $\gamma\colon B\rightarrow A$ as follows.

Pick arbitrary linearly independent elements $a_1,b_1\in V$ and set $\gamma(a_1)=x_1,\gamma(b_1)=y_1$. Write $V^{n_1}=V\oplus W_{n_1}$ and set $\gamma\left(W_{n_1}\right)=0$. Suppose that $\gamma$ was defined on $V^{n_i}$. Decompose $V^{n_i+1}=V^{n_i}\oplus Fa \oplus Fb \oplus U$, for suitable $a,b$ and $U$ (perhaps the zero space) and $V^{n_{i+1}}=V^{n_i+1}\oplus W$. Let $\gamma(a)=x_{i+1},\gamma(b)=y_{i+1}$ and $\gamma\left(U\right)=\gamma\left(W\right)=0$.

Let us now estimate $w_\gamma(n_k)$ by means of $\{n_i,d_i\}_{i=1}^{\infty}$.

On one hand:
\begin{eqnarray*}
w_\gamma(n_k) & = & \dim_F \sum_{j_1+\cdots+j_s\leq n_k} \gamma\left(V^{j_1}\right)\cdots \gamma\left(V^{j_s}\right) \\
& \leq & \dim A_k \\
& \leq & (2k)^{d_k}
\end{eqnarray*}

On the other hand, since $d_k\geq \frac{n_k}{n_{k-1}+1}$:
\begin{eqnarray*}
w_\gamma(n_k) & = & \dim_F \sum_{j_1+\cdots+j_s\leq n_k} \gamma\left(V^{j_1}\right)\cdots \gamma\left(V^{j_s}\right) \\
& \geq & \dim_F \gamma\left(V^{n_{k-1}+1}\right)^{\frac{n_k}{n_{k-1}+1}} \\
& \geq & \# \left\{ u_1\cdots u_d\ \Bigg|\ 1\leq d\leq \frac{n_k}{n_{k-1}+1},\ \forall i\leq d:\ u_i\in \{x_k,y_k\} \right\} \\
& \geq & 2^{\frac{n_k}{n_{k-1}+1}}
\end{eqnarray*}

?????

\section{Polynomial oscillations and Warfield's question on tensor products}


We now construct algebras whose growth functions satisfy prescribed oscillations within polynomial intervals. As a consequence, we give examples of algebras whose tensor product has a prescribed GK-dimension. Our algebras in this section are constructed as subdirect products of finite matrix rings, and are therefore semiprime. These are the first examples for which the GK-dimension is strictly subadditive with respect to tensor products, which was left open in \cite[Page~167]{KrauseLenagan}.

\subsection{Growth of subdirect products of matrices}
Our construction is an adaptation of \cite{BGarbitraryreps}. Fix an arbitrary base field $F$ and let $\S\subseteq \{2,3,\dots\}$ be an arbitrary subset. Let $\Pi_\S=\prod_{n\in \S} M_n(F)$ and whenever $n\in \S$ denote by $\pi_n$ the projection from $\Pi_{\S}$ to $M_n(F)$.
Let $e=(e_n)_{n\in\S}\in \Pi_{\S}$ be the element whose components $e_n \in M_n(F)$ are the idempotent matrices with the upper left entry $1$ and all other entries zeros:
$$\left( \begin{array}{ccc}
1 & 0 & \cdots \\
0 & 0 & \cdots \\
\vdots & \vdots & \ddots
\end{array} \right)$$
Let $\sigma=(\sigma_n)_{n\in\S}$ be the element whose components $\sigma_n \in M_n(F)$ are the rotation matrices: $$\left( \begin{array}{cc}
0 & I_{n-1} \\
1 & 0 \end{array} \right)$$ where $I_{n-1}$ denotes the $(n-1)$-identity matrix.
Inside $\Pi_\S$, let $R_\S=F\left<e,\sigma,\sigma^{-1}\right>$. Note that $F$ is diagonally embedded into $R_\S$, making the latter an $F$-algebra. Since for each $n$, the elements $e_n,\sigma_n,\sigma_n^{-1}$ generate the full matrix ring $M_n(F)$, it follows that $R_\mathcal{S}$ is a subdirect product of matrix algebras.

For $\mathcal{S}\subseteq \mathbb{N}$, define: $$\delta_\mathcal{S}(r) = \sum_{k\in \mathcal{S}\cap [1,r]} k^2$$
we have:

\begin{lem}\label{growth_tensor}
Let $\mathcal{S}\subseteq \{2,3,\dots\}$ and let $V = F + Fe + F\sigma+ F\sigma^{-1}$ generate $B_{\S}$. Then: $$\delta_\mathcal{S}\left(\left\lfloor \frac{N-2}{3}\right\rfloor\right)\leq \dim_F V^N\leq cN^2+\delta_\mathcal{S}(N)$$ for some $c>0$.
\end{lem}

\begin{proof}
Fix $N$. 
We claim that $\bigoplus_{n\in \mathcal{S}\cap [1,N]} M_n(F) \subseteq V^{3N+2}$.
To prove this, observe that the elements $\sigma ^ i e \sigma ^ j e \sigma ^ k$ where $0\leq i,j,k\leq N$ span $\bigoplus_{n\in \mathcal{S}\cap [1,N]} M_n(F)$ since fixing such $n$ we have that $\pi_{n'} (\sigma ^i e \sigma ^ n e \sigma ^ k) = 0$ for $n<n'\in \mathcal{S}$ and $\pi_n (\sigma ^ i e \sigma ^ n e \sigma ^ k) = \pi_n(\sigma ^ i e \sigma ^ k)$, are the matrix units in the $n\times n$ component. Thus: $$ \delta_{\mathcal{S}}(N) = \dim_F \bigoplus_{n\in \mathcal{S}\cap [1,N]} M_n(F) \leq \dim_F V^{3N+2}. $$
Conversely, write $V^N \subseteq \mathcal{V}_1 + \mathcal{V}_2$ where: $$ \mathcal{V}_1 = \Span_F \{\sigma ^ i, \sigma ^ j e \sigma ^ k \}_{|i|,|j|+|k|+1\leq N} $$ 
$$ \mathcal{V}_2 = \Span_F\{ue\sigma^l eu'\ |\ u,u'\ \text{are monomials in}\ \sigma,\sigma^{-1},e\}_{-N\leq l\leq N,l\neq 0} $$
We calculate that $\dim_F \mathcal{V}_1\leq (2N+1)+(2N+1)^2\leq cN^2$ for all $N\geq 1$ and for suitable $c>0$.
Observe that $\pi_{m}(\mathcal{V}_2)=0$ for $m>N$, since $e_m\sigma_m^le_m=0$ if $-m<l<m,l\neq 0$; therefore $f\mapsto \left(\pi_m(f)\right)_{m\in \S\cap [1,N]}$ linearly embeds $\mathcal{V}_2$ into $\bigoplus_{m\in \mathcal{S}\cap [1,N]} M_m(F)$. It follows that: 
\begin{eqnarray*}
\dim_F V^N & \leq & \dim_F \mathcal{V}_1 + \dim_F \bigoplus_{m\in \mathcal{S}\cap [1,N]} M_m(F) \\ & \leq & cN^2 + \sum_{m\in\mathcal{S}\cap [1,N]} m^2 \\ & \leq & cN^2 + \delta_{\mathcal{S}}(N)
\end{eqnarray*}
as claimed.
\end{proof}

\subsection{Oscillating polynomial growth}
Fix $2\leq \gamma \leq \alpha \leq \beta\leq 3$.
Let:
$$ A=\{\lceil n^{\frac{1}{\alpha-2}} \rceil\ |\ n\in \mathbb{N} \},\ \ B=\{\lceil n^{\frac{1}{\beta-2}} \rceil\ |\ n\in \mathbb{N} \},\ \ C=\{\lceil n^{\frac{1}{\gamma-2}} \rceil\ |\ n\in \mathbb{N} \},\ \ D=\{ 2^n |\ n\in \mathbb{N} \}$$
If either $\alpha,\beta,\gamma$ is equal to $2$, replace the corresponding set by $D$.
Fix a sequence of positive integers:
$$ 1< d_1<d_1'<e_1'<e_1<d_2<d_2'<e_2'<e_2<d_3<\cdots $$
such that $d_i'>2^{d_i},e_i'>2^{d_i'},e_i>2^{e_i'},d_{i+1}>2^{e_i}$ for all $i\in \mathbb{N}$. Let $e_0=e_0'=1$.
Define: $$\mathcal{S}=\left(\left(\bigcup_{i=1}^{\infty} [e_{i-1},d_i]\right)\cap A\right)\cup \left(\left(\bigcup_{i=1}^{\infty} [d_i,e_i]\right)\cap C\right)$$

$$\mathcal{S}'=\left(\left(\bigcup_{i=1}^{\infty} [e'_{i-1},d'_i]\right)\cap D\right)\cup \left(\left(\bigcup_{i=1}^{\infty} [d'_i,e_i']\right)\cap B\right)$$

\begin{lem} \label{delta_calculus}
With the above notations:
\begin{enumerate}
    \item For $n\gg 1$, we have $\delta_{\mathcal{S}}(n)\leq cn^{\alpha}$ and $\delta_{\mathcal{S}'}(n)\leq cn^{\beta}$
    \item For $i\gg 1$, we have: $$\delta_{\mathcal{S}}(d_i)\geq \frac{1}{c}d_i^{\alpha},\ \
    \delta_{\mathcal{S}'}(e_i')\geq \frac{1}{c}e_i'^{\beta},\ \ \frac{1}{c}e_i'^{\gamma}\leq \delta_{\mathcal{S}}(e_i')\leq ce_i'^{\gamma}$$
    \item For $i\gg 1$, if $d_i'\leq n\leq e_i$ then $\delta_{\mathcal{S}}(n)\leq cn^\gamma$ and if $e_i\leq n\leq d_{i+1}'$ then $\delta_{\mathcal{S}'}(n)\leq cn^2$.
\end{enumerate}
For some constant $c>1$.
\end{lem}

\begin{proof}

Let $E = \{\lceil n^{\frac{1}{\theta}} \rceil\ |\ n\in \mathbb{N} \}$ for some $\theta>0$.
First, observe that (for $N_1,N_2\gg_\theta 1$): 

$$(*)\ \ \ \sum_{k\in E\cap [N_1,N_2]} k^2 \leq \sum_{n\in \mathbb{N} \cap [(N_1-1)^\theta,N_2^\theta]} (n^{\frac{1}{\theta}}+1)^2 \leq \int_{0}^{N_2^\theta} 4x^{\frac{2}{\theta}}dx\leq cN_2^{2+\theta}$$
for some constant $c>0$, and:
$$(**)\ \ \ \sum_{k\in E\cap [N_1,N_2]} k^2 \geq \sum_{n \in \mathbb{N} \cap [N_1^\theta, N_2^\theta]} n^{\frac{2}{\theta}} \geq \int_{N_1^\theta+1}^{N_2^\theta-1} x^{\frac{2}{\theta}}dx\geq c'N_2^{2+\theta}-c''N_1^{2+\theta}$$
for some constants $c',c''>0$.


Assume first that $\gamma>2$.
\begin{enumerate}
    \item Using $(*)$, we can bound: $$ \delta_{\mathcal{S}}(n)\leq \sum_{k=1}^{\lceil\log_2 n\rceil} k^2 + \sum_{k\in A\cap [1,n]} k^2 + \sum_{k\in C\cap [1,n]} k^2 \leq (\log_2 n+1)^3+cn^\alpha+cn^\gamma \leq 3cn^\alpha $$
    $$ \delta_{\mathcal{S}'}(n)\leq \sum_{k=1}^{\lceil\log_2 n\rceil} k^2 + \sum_{k\in B\cap [1,n]} k^2 + \sum_{k\in D\cap [1,n]} k^2 \leq (\log_2 n+1)^3+cn^\beta+cn^2 \leq 3cn^\beta $$
    \item For $i\gg 1$, by $(**)$: $$ \delta_{\mathcal{S}}(d_i)\geq \sum_{k\in A\cap [e_{i-1},d_i]} k^2 \geq c'd_i^\alpha-c''e_{i-1}^\alpha \geq c'''d_i^\alpha $$ for some $c'''>0$, where the last inequality follows since $e_{i-1}\leq \log_2 d_i$ by construction. Similarly: $$ \delta_{\mathcal{S}}(e_i')\geq \sum_{k\in C\cap [d_i,e_i']} k^2 \geq c'e_i'^\gamma-c''d_i^\gamma \geq c'''e_i'^\gamma $$ 
    $$ \delta_{\mathcal{S}'}(e_i') \geq \sum_{k\in B\cap [d_i',e_i']} k^2 \geq c'e_i'^\beta-c''d_i'^\beta \geq c'''e_i'^\beta$$ for some $c'''>0$. 
    It remains to show that $ \delta_{\mathcal{S}}(e_i')\leq ce_i'^\gamma $. Indeed, by $(*)$,
    $$ \delta_{\mathcal{S}}(e_i') \leq \sum_{k=1}^{d_i} k^2 + \sum_{k\in C\cap [d_i,e_i']} k^2 \leq d_i^3 + ce_i'^\gamma \leq be_i'^\gamma $$ for some $b>0$.
    \item Suppose that $d_i'\leq n\leq e_i$. Then:
    $$ \delta_{\mathcal{S}}(n) \leq \sum_{k=1}^{d_i} k^2 + \sum_{k\in C\cap [d_i,n]} k^2 \leq d_i^3 + cn^\gamma \leq be_i'^\gamma $$ for some $b>0$.
    Suppose that $e_i\leq n\leq d_{i+1}'$; then:
    $$ \delta_{\mathcal{S}'}(n) \leq \sum_{k=1}^{e_i'} k^2 + \sum_{k\in D\cap [e_i',n]} k^2 \leq e_i'^3 + bn^2 \leq b'n^2 $$ for some constants $b,b'>0$ (recall that $D$ consists of powers of $2$).
\end{enumerate}
The proof in case that either $\alpha,\beta,\gamma$ is equal to $2$ is very similar, using the fact that $\sum_{k\in D\cap [N_1,N_2]} k^2\leq r\log_2^3 N_2$ for some constant $r>0$. We thus omit the repetition.
\end{proof}

\subsection{Growth of tensor products}

\begin{thm}[{Theorem D}] \label{tensor}
For arbitrary $2\leq \gamma \leq \alpha\leq \beta\leq 3$ we have: $$ \GK(R_{\mathcal{S}})=\alpha,\ \ \GK(R_{\mathcal{S}'})=\beta,\ \ \GK(R_{\mathcal{S}}\otimes_F R_{\mathcal{S}'})=\beta+\gamma $$
and $R_\mathcal{S},R_{\mathcal{S}'}$ are semiprime.
\end{thm}
\begin{proof}
That $R_\mathcal{S},R_{\mathcal{S}'}$ are semiprime is clear from their construction as subdirect products of matrix algebras. That $\GK(R_{\mathcal{S}})=\alpha$ and $\GK(R_{\mathcal{S}'})=\beta$ follows immediately from Lemma \ref{delta_calculus}(1),(2) together with
Lemma \ref{growth_tensor}.

Let us analyze the growth of $R_{\mathcal{S}}\otimes_F R_{\mathcal{S}'}$. Let $V,V'$ be the standard generating subspaces of $R_{\mathcal{S}},R_{\mathcal{S}'}$, respectively (namely, those spanned by $1,e,\sigma,\sigma^{-1}$). Then $V\otimes_F V'$ is a generating subspace of $R_{\mathcal{S}}\otimes_F R_{\mathcal{S}'}$. It holds that: $$\left(V\otimes_F V'\right)^n\subseteq V^n\otimes_F V'^n\subseteq \left(V\otimes_F V'\right)^{2n}$$
By Lemma \ref{delta_calculus}(1),(3), if $d_i'\leq n\leq e_i$ then $\dim_F V^n\otimes_F V'^n\leq c^2n^{\beta+\gamma}$, and if $e_i\leq n\leq d_{i+1}'$ then $\dim_F V^n\otimes_F V'^n\leq c^2n^{\alpha+2}$. Hence $\GK(R_{\mathcal{S}}\otimes_F R_{\mathcal{S}'})\leq \beta+\gamma$. On the other hand, by Lemma \ref{delta_calculus}(2), for each $i$ we have:
$$ \dim_F V^{e_i'}\otimes_F V'^{e_i'} \geq \frac{1}{c^2}e_i'^{\beta+\gamma}, $$ and the claim follows.
\end{proof}